\documentclass[12pt,a4paper]{amsart}
\usepackage{amsmath,amsfonts,amssymb,amsthm,mathrsfs}
\usepackage{color}

\setlength{\oddsidemargin}{0cm}
\setlength{\evensidemargin}{0cm}
\setlength{\topmargin}{0cm}
\setlength{\textheight}{23.7cm}
\setlength{\textwidth}{16cm}


\linespread{1.055}

\newtheorem{theorem}{Theorem}[section]
\newtheorem{definition}[theorem]{Definition}

\newtheorem{corollary}[theorem]{Corollary}
\newtheorem{lemma}[theorem]{Lemma}

\theoremstyle{remark}
\newtheorem{rem}[theorem]{Remark}

\usepackage{delarray}
\usepackage{enumerate}

\numberwithin{equation}{section}
\numberwithin{theorem}{section}
\numberwithin{figure}{section}

\renewcommand{\P}{\mathbb{P}}
\newcommand{\X }{{\mathcal{X}_\sigma}}
\newcommand{\ud }{\,{\rm d}}
\newcommand{\rf}[1]{\eqref{#1}}

\newcommand{\bbfR}{{\mathbb R}}

\newcommand{\bbfN}{{\mathbb N}}
\newcommand{\vf}{{\varphi}}
\newcommand{\ve}{{\varepsilon }}
\newcommand{\se}{{\text{\rm{e}}}}

\newcommand{\bfo}{{\mathfrak b}}

\newcommand{\R}{{\mathbb R}}
\date{\rule{0pt}{15pt}\today}
\begin{document}

\title[Navier-Stokes system]{$L^2$-asymptotic stability of mild solutions \\
to Navier-Stokes system in $\R^3$
}

\author [G.~Karch]{Grzegorz Karch}
\address[G.~Karch]{Instytut Matematyczny, Uniwersytet Wroc\l awski, pl. Grunwaldzki 2/4, 50-384 Wroc\l aw, Poland}
\email{grzegorz.karch@math.uni.wroc.pl}

\author[D.~Pilarczyk]{Dominika Pilarczyk}
\address[D.~Pilarczyk]{Instytut Matematyczny, Uniwersytet Wroc\l awski, pl. Grunwaldzki 2/4, 50-384 Wroc\l aw, Poland}
\email{dominika.pilarczyk@math.uni.wroc.pl}

\author[M.E.~Schonbek] {Maria E. Schonbek}
\address[M.E.~Schonbek]{Department of Mathematics, UC Santa Cruz, CA 95064, USA}
\email{schonbek@ucsc.edu}

\bigskip


\begin{abstract}
We consider global-in-time small mild solutions of the initial value problem to the incompressible Navier-Stokes equations
 in $\bbfR^3$. For such solutions,  an asymptotic stability is established 
under arbitrarily large initial $L^2$-perturbations.
\date{\today}
\\
\\
{\small \it Mathematics Subject Classification (2000):}  76D07; 76D05; 35Q30; 35B40. \\
{\small \it Keywords:} {\small Navier--Stokes equations; mild solutions; weak solutions; asymptotic stability of solutions, Fourier splitting method}

\end{abstract}

\maketitle

\section{Introduction}
\setcounter{equation}{0}

The classical theory of viscous incompressible fluid flow is governed by the celebrated Navier-Stokes equations
\begin{align}
\label{NS eq}
         u_t-\Delta u+ (u \cdot \nabla ) u +\nabla p&=F,\quad  (x,t) \in \bbfR^3\times (0,\infty ),\\
\label{NS div}         
       {\rm div }\ u&=0, \\
\label{NS in d}       
       u(x,0)&=u_0 (x),
\end{align}  
where $u= \big(u_1 (x,t) ,u_2 (x,t) ,u_3 (x,t)\big)$ is the velocity of the fluid and $p = p(x,t)$    the scalar pressure. The functions  $u_0 = u_0 (x)$ and $F = F(x,t)$ denote a given initial velocity and an external force. A large area of modern research is devoted to deducing different  qualitative properties of solutions for the incompressible Navier-Stokes  equations. The work on this subject is too broad to attempt to give a complete list of references. We will   limit ourselves to  discussions directly connected  to issues in this paper, specifically,  questions on  stability of solutions in the whole three dimensional space.

There are two main approaches for the construction of solutions to the initial value problem \rf{NS eq}--\rf{NS in d}. In the pioneering  paper by Leray \cite{L}, weak solutions to \rf{NS eq}--\rf{NS in d} are obtained for all divergence free initial data $u_0 \in L^2 (\bbfR^3)^3$ and $F=0$. These solutions satisfy equations \rf{NS eq}--\rf{NS div} in the distributional sense and  fulfill a suitable energy inequality. Fundamental questions on  regularity and  uniqueness of the weak solutions to the 3D Navier-Stokes equations remain open, see {\it e.g.} the books \cite{T, Lemarie} and the review article \cite{Cannone}
for an additional background and references. 

The second approach leads to mild solutions. These solutions are given by an integral formulation using the Duhamel principle and  are obtained  by means of the Banach contraction principle. Specifically, mild solutions  are known to exist for large  initial conditions  on a finite time interval. For sufficiently small data, in appropriate scale-invariant spaces, the  corresponding mild solutions are global-in-time and their dependence on data is regular. We refer the reader to ~\cite{Cann-Book,Cannone, Lemarie} as well as to Section \ref{hardy_ineq} of our paper for a review of the theory on mild solutions 
to problem  \rf{NS eq}--\rf{NS in d}.

The goal of this work is to describe a  link between these two approaches. Our  result can be summarized as follows.
 Assume that $V = V(x,t)$ is a global-in-time mild solution of  \rf{NS eq}--\rf{NS in d}, small in  some scale-invariant space. We show that 
problem  \rf{NS eq}--\rf{NS div} has a global-in-time weak solution in the sense of Leray corresponding to
  the initial datum $V(x, 0)$ perturbed by an arbitrarily large divergence free $L^2$-vector field,
Moreover, this weak solution 
converges in the energy norm 
as $t\rightarrow \infty $ to the mild solution $V = V (x, t)$.
In other words, we show that a sufficiently small mild solution $V = V (x, t)$ of  problem \rf{NS eq}--\rf{NS div} is, in some sense, an asymptotically  stable weak solution  of this problem under all divergence free initial perturbations from $L^2 (\bbfR^3)^3$.

This paper generalizes  the recent work of the first two authors \cite{KarchP}, where the result was restricted to some particular solutions. In \cite{KarchP},  consideration was given to the explicit stationary Slezkin--Landau solutions $V = V(x)$ of the  system \rf{NS eq}--\rf{NS div} \cite{Landau,LL,Slezkin}, which are one-point singular and correspond to  singular external forces. 
A similar technique to that one in \cite{KarchP} has been used in \cite{HHMS} to show the stability of the Ekman spiral, which is an explicit stationary solution to the  three-dimensional Navier-Stokes equations with rotation in the half-space $\R^3_+$ subject to the Dirichlet boundary conditions.

The approach from \cite{KarchP} 
cannot be applied in a general case of a time dependent solution $V = V(x , t)$ (see Remark \ref{rem:stat} for more details). Here, we present a new method which allows to show the $L^2$-stability of a large class of mild solutions including the Slezkin-Landau ones.

Our  result generalizes also a series of papers on the  $L^2$-asymptotic stability either of the zero solution \cite{S1, S, KM, W, BM3, ORS} or  nontrivial stationary solutions \cite{BM95} to the Navier-Stokes system. We only  give an incomplete list of  reference papers  since the entire  list would be overwhelming. The method proposed here allows  to show this type of  asymptotic $L^2$-stability of time-dependent solutions including time-periodic solutions (or almost periodic) and of self-similar solutions,
see Remarks \ref{rem:period} and \ref{rem:self}, below. Limiting our stability results to solutions satisfying the global Serrin 
criterion (see Remark \ref{rem:Kozono}, below), our result relates to the asymptotic stability of large solutions with large
perturbations of the Navier-Stokes equations, obtained by Kozono \cite{K00}.

\subsection*{Notation.}  
\begin{itemize}
   \item We denote by $\|\cdot\|_p$ the usual norm of the Lebesgue space $L^p (\bbfR^3)$ with $p \in [1,\infty]$.
   \item In the case of all other Banach spaces $X$ used in this work, the norm in $X$
is denoted by $\|\cdot\|_X$.
   \item For each space $X$, we set $X_\sigma =\{u\in X^3: \textrm{div } u=0\}$.
   \item $C^\infty_c (\bbfR^3)$ denotes the  set of smooth and compactly supported functions. 
 \item   $\mathcal{S}(\bbfR^3)$ is the Schwartz class of smooth and rapidly decreasing functions.
   \item   The Fourier transform of an integrable function $f$ has the normalization 
\begin{equation*}
    \widehat{f}(\xi ) = (2 \pi)^{-\frac{3}{2}}\int_ {\bbfR^3} \se^{	- i x \cdot \xi }f(x) \ud x,
\end{equation*}
thus $\| f\|_2 = \| \hat{f} \|_2$ for every $f \in L^2 (\bbfR^3)$.
   \item We use the  Sobolev spaces
$H^s(\bbfR^3 ) =\{ f\in L^2(\bbfR^3 ): |\xi|^s \widehat f\in L^2(\bbfR^3 )\}$
and their homogeneous counterparts
$\dot H^s(\bbfR^3 ) =\{ f\in \mathcal{S}'(\bbfR^3 ): |\xi|^s \widehat f\in L^2(\bbfR^3 )\}$ supplemented with the usual norms.
   \item   In the case of $p=2$, the standard inner product in $L^2_{\sigma } (\bbfR^3 )$ is given by $\langle \cdot , \cdot \rangle $. 
 
   \item  Constants independent of solutions
 may change from line to line and  will be denoted by $C$.
\end{itemize} 

\section{Statement of the problem and main results}
\setcounter{equation}{0}

In the sequel, we suppose that $V=V(x,t)$ is a global-in-time solution (in the sense of distributions) to the Navier-Stokes system \rf{NS eq}--\rf{NS in d} with an external force $F=F(x,t)$ and an initial datum $V(x,0)=V_0 (x)$. We require that there exists a Banach space $(\X, \| \cdot \|_{\X})$ such that the solution $V$ satisfies the following properties: 
\begin{itemize}
   \item[{\it i)}]  $V=V(x,t)$ is bounded and weakly continuous in time as a function with values in $\X$:
\begin{equation}\label{V_space}
V \in C_w([0, \infty), \X ),
\end{equation}
that is $V\in L^\infty ([0, \infty ), \X )$ and the function $\langle V(t), \vf \rangle $ is continuous with respect to $t \geqslant 0$ for all $\vf \in \mathcal{X}_\sigma^*$.
   \item[{\it ii)}] There exists a constant $K>0$ such that
\begin{equation}\label{hardy}
   \left| \int_{\bbfR^3} (g\cdot \nabla )h \cdot V(t) \ud x  \right|\leqslant K \sup_{t>0} \| V (t)\|_{\X}\|\nabla g\|_2 \| \nabla h\|_2 ,
\end{equation}
for each $t>0$ and for all $g, h \in \dot H^1_\sigma (\bbfR^3)$ (see Remark \ref{form_b}, below).
   \item[{\it iii)}] The solution $V = V(x,t)$ is sufficiently small, in the sense that 
\begin{equation}\label{K}
   K \sup_{t>0} \| V (t)\|_{\X}<1,
\end{equation}
where $K>0$ is the constant from inequality \rf{hardy}. 
\end{itemize}

In Section \ref{hardy_ineq}, we recall several classical results on  existence of small, global in-time mild solutions to the Navier-Stokes system \rf{NS eq}--\rf{NS div} satisfying assumptions \rf{V_space}--\rf{K}. In particular, we show that the space ${\X}$ can be chosen either as the Lebesgue space $L^3_{\sigma } (\bbfR^3)$, the weak Lebesgue space $L^{3, \infty }_{\sigma } (\bbfR^3 )$,  the Morrey space $\dot{M}^3_p (\bbfR^3) $ for each $2< p \leqslant 3$, or other scaling invariant spaces, see Theorem \ref{thm:hardy}, below.

\begin{rem}\label{form_b}
As it is standard  in the study of the Navier-Stokes system (see {\it e.g.}~\cite{T}), we define the trilinear form  
\begin{equation}\label{b_def}
   \begin{split}
 \mathfrak{b}(f,g,h) &\equiv \sum_{i,j=1}^{3}\int_{\bbfR^3} f^i g_{x_i}^j h^j \ud x = \int_{\bbfR^3} (f\cdot \nabla )g \cdot h \ud x  = -\int_{\bbfR^3} (f \cdot \nabla ) h \cdot g \ud x\\
& = -\mathfrak{b}(f, h, g)
   \end{split}
\end{equation}
for all $f,g,h \in \mathcal{S}_\sigma (\bbfR^3)$. All equalities in \rf{b_def} can be established combining the integration by parts with the divergence free condition. In particular, equality in \rf{b_def} with $g=h$, implies
\begin{equation}\label{b_zero}
   \mathfrak{b}(f, h ,h)=0  \qquad \textrm{for all} \quad f, h \in \mathcal{S}_\sigma (\bbfR^3) .
\end{equation}
Due to \rf{b_def}, our standing assumption \rf{hardy} can be rewritten either as the inequality 
\begin{align}
\label{est_b_1} \big| \mathfrak{b}(g, h, V)\big| &\leqslant K \sup_{t>0} \| V (t)\|_{\X}\|\nabla  g\|_2 \| \nabla  h\|_2  \\
\intertext{or}
\label{est_b_2} \big| \mathfrak{b}(g, V, h)\big| &\leqslant K \sup_{t>0} \| V (t)\|_{\X}\|\nabla  g\|_2 \| \nabla  h\|_2.
\end{align}
for all $g, h \in \dot{H}^1_\sigma (\bbfR^3)$.
\end{rem}

\begin{rem}\label{dual_space}
Notice that inequality \rf{hardy} implies that the mapping $g\mapsto \mathfrak{b}(g, h, V)$ is a bounded linear functional on $\dot{H}^1_\sigma (\bbfR^3)$ for every $h \in \dot{H}^1_\sigma (\bbfR^3)$ and every $V \in \X$. Thus, if $g_n  \rightharpoonup g$ weakly in $\dot{H}^1_\sigma (\bbfR^3)$, then $\mathfrak{b}(g_n, h, V) \rightarrow \mathfrak{b}(g, h, V)$. This observation will allow us to pass to weak limits in the trilinear form $\mathfrak{b}(\cdot , \cdot , \cdot )$.
\end{rem}

\begin{rem}\label{rem:multip}
Following the notation and the terminology from the monograph \cite[Ch.~21]{Lemarie}, inequality \rf{hardy} holds true if $\X\subset X_1(\bbfR^3)$, where 
$X_1(\bbfR^3)$ is the set of pointwise multipliers from $H^1_\sigma (\bbfR^3)$ to $L^2_\sigma(\bbfR^3)$. The linear space $X_1(\bbfR^3)$ is a Banach space equipped 
with the norm $ \|f\|_{X_1}\equiv \sup \{ \|fg\|_2\,:\, \|g\|_{H^1}\leq 1\}.$
It is easy to show using a duality argument in the Hilbert space $L^2(\bbfR^3)$ (see {\it e.g.} \cite[Ch.~2]{LemGal}) that the inequality 
\begin{equation*}
   \bigg|\int_{\bbfR^3} W\cdot ( g \cdot \nabla ) h \ud x \bigg|\leqslant C
\|\nabla  g \|_2 \| \nabla  h  \|_2
\end{equation*}
holds true for all vector fields $g, h \in \dot H^1 (\bbfR^3)$  and a certain constant $C=C(W)$ if and only if $W \in X_1(\bbfR^3)$.
We refer the reader to \cite{Lemarie, LemGal, SD,G06} for more properties of pointwise multipliers and, in particular, for explanations (as well as an extensive review and a complete bibliography) how  inequality \rf{hardy} is related to the so-called weak-strong uniqueness of solutions to the Navier-Stokes equations.
\end{rem}

We now state the main results of this work: a type of  asymptotic stability for  global-in-time solutions $V=V(x,t)$ under arbitrary 
large  $L^2_{\sigma }(\bbfR^3 )$-perturbations. 

\begin{theorem}[Existence of weak solutions]\label{main}
Let  $V=V(x,t)$ be a global-in-time solution to the initial value problem  \rf{NS eq}--\rf{NS in d} in $C_w\big( [0, \infty ) , \X \big)$ satisfying properties  \rf{V_space}--\rf{K}. Denote $V_0 = V(\cdot , 0)$ and let $w_0 \in L^2_\sigma (\bbfR^3)$ be arbitrary. 
Then, the Cauchy problem \rf{NS eq}--\rf{NS in d} with the initial condition $u_0=V_0 + w_0 $ and  the same external force $F$ has a global-in-time distributional solution $u = u(x,t)$ of the  form $u(x,t)=V(x,t)+w(x,t)$, where $w = w(x,t)$ is a weak solution of the corresponding  perturbed problem {\rm (}see \rf{w:eq}--\rf{w:ini} below{\rm )} satisfying
\begin{equation}\label{ex_w_space}
  w\in  X_T \equiv  C_w \big( [0,T], L^2_\sigma (\bbfR^3)\big)\cap L^2\big([0,T], \dot H_\sigma^1(\bbfR^3)\big)  \qquad \textit{for each} \ \ T>0.
\end{equation}
\end{theorem}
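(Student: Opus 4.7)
The plan is to construct $w$ directly as a weak solution of the perturbed system obtained by inserting $u = V+w$ into \rf{NS eq}--\rf{NS in d} and subtracting the equation satisfied by $V$. A formal computation gives that $w$ must solve
\begin{equation*}
w_t - \Delta w + (V\cdot\nabla) w + (w\cdot\nabla) V + (w\cdot\nabla) w + \nabla \pi = 0, \qquad \text{div }w = 0, \qquad w(x,0) = w_0,
\end{equation*}
for some pressure $\pi$. Once such a $w\in X_T$ is produced, the vector field $u = V + w$ is a distributional solution of the original Cauchy problem with the prescribed data and force, so the task reduces to a Leray-type construction for the perturbation equation.

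First I would apply a Galerkin scheme based on a complete orthonormal system of $L^2_\sigma(\bbfR^3)$ whose elements lie in $H^1_\sigma$, producing smooth approximants $w_n$ of a finite-dimensional projection of the perturbed problem with initial datum $P_n w_0$. Testing the equation against $w_n$ and using the standard cancellations $\mathfrak{b}(V,w_n,w_n)=0$ and $\mathfrak{b}(w_n,w_n,w_n)=0$ coming from \rf{b_zero}, the only $V$-contribution that survives is $\mathfrak{b}(w_n,V,w_n)$. The hypothesis \rf{hardy}, in the form \rf{est_b_2} with $g = h = w_n$, bounds this by $K\sup_{t>0}\|V(t)\|_{\X}\|\nabla w_n\|_2^2$ and yields
\begin{equation*}
\tfrac{1}{2}\tfrac{d}{dt}\|w_n(t)\|_2^2 + \bigl(1-K\sup_{t>0}\|V(t)\|_{\X}\bigr)\|\nabla w_n(t)\|_2^2 \leqslant 0.
\end{equation*}
By the smallness condition \rf{K}, this gives a uniform bound on $w_n$ in $L^\infty([0,T],L^2_\sigma)\cap L^2([0,T],\dot H^1_\sigma)$, independent of $n$ and $T$, together with $\|w_n(t)\|_2\leqslant\|w_0\|_2$.

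Next I would pass to the limit $n\to\infty$. Combining the energy bound with an estimate for $\partial_t w_n$ in a space of the form $L^{4/3}([0,T];H^{-s}_\sigma)$ for suitably large $s$ (where the drift term $(w_n\cdot\nabla)V$ is controlled by \rf{hardy} via duality, as in Remark \ref{dual_space}), the Aubin--Lions lemma gives strong $L^2$-compactness on every compact in space-time. Then along a subsequence $w_n\to w$ weakly in $L^2([0,T],\dot H^1_\sigma)$, weakly-$\ast$ in $L^\infty([0,T],L^2_\sigma)$, and strongly in $L^2([0,T]\times B_R)$ for every $R$. Weak-strong pairing handles $\mathfrak{b}(w_n,w_n,\vf)$ for test functions $\vf\in C^\infty_c((0,T);\mathcal{S}_\sigma)$ as in Leray's argument, weak $\dot H^1$-convergence handles $\mathfrak{b}(V,w_n,\vf)$, and the key term $\mathfrak{b}(w_n,V,\vf)$ is passed to the limit by Remark \ref{dual_space}, which identifies it as a weakly continuous linear functional of its first entry on $\dot H^1_\sigma$. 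An energy inequality for $w$ is inherited by lower semicontinuity, weak continuity into $L^2_\sigma$ is obtained by the usual modification on a null set (the equation shows $w\in C_w([0,T],L^2_\sigma)$), and the initial condition $w(\cdot,0)=w_0$ follows from the strong trace at $t=0$.

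The main obstacle in this scheme is the drift term $(w\cdot\nabla)V$: for a generic mild solution $V$ in a scale-invariant class, the gradient $\nabla V$ has no $L^2$-regularity, so standard energy estimates break down. This is precisely the role of hypothesis \rf{hardy}, which replaces pointwise regularity of $\nabla V$ by a pointwise-multiplier type bound; it simultaneously provides the energy estimate and the weak continuity needed to pass to the limit. The smallness condition \rf{K} is what ultimately absorbs this term into the dissipation.
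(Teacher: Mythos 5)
Your proposal is correct and follows essentially the same route as the paper: a Galerkin scheme in $L^2_\sigma(\bbfR^3)$, the energy estimate obtained from the cancellations \rf{b_zero} with the surviving term $\mathfrak{b}(w_m,V,w_m)$ absorbed into the dissipation via \rf{est_b_2} and the smallness condition \rf{K}, local strong $L^2$-compactness in space-time (the paper cites Temam's fractional time-derivative argument where you invoke Aubin--Lions, an equivalent device), and passage to the limit in the $V$-dependent trilinear terms via the weak continuity supplied by Remark \ref{dual_space} together with dominated convergence. The only cosmetic difference is that the paper works with the integrated weak formulation \rf{new_eq_w_m_1} against time-dependent test functions in $W_m$ rather than with distributional test functions on $(0,T)$, which it needs later for the generalized energy inequalities.
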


\begin{theorem}[Asymptotic behavior of weak solutions]\label{main2}
A solution $u = u(x,t)$ of problem \rf{NS eq}--\rf{NS in d} considered in Theorem \ref{main} can be constructed so that  \[ \| w(t) \|_2 = \| u(t) - V(t) \|_2 \rightarrow 0 \qquad \mbox{as}\quad t\rightarrow \infty .\]
\end{theorem}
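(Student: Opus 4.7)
I would derive an energy inequality for the perturbation $w := u - V$ and then apply Schonbek's Fourier splitting method, adapted so as not to rely on $V\in L^2(\bbfR^3)$.

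Subtracting the equations for $V$ and $u$ shows that $w$ satisfies
\[
w_t-\Delta w+(V\!\cdot\!\nabla) w+(w\!\cdot\!\nabla) V+(w\!\cdot\!\nabla)w+\nabla q=0,\qquad \operatorname{div}w=0,\qquad w(0)=w_0.
\]
Testing formally against $w$ and using the antisymmetry identity \rf{b_zero} to annihilate $\mathfrak b(V,w,w)$ and $\mathfrak b(w,w,w)$, only the cubic term $\mathfrak b(w,V,w)$ remains; by \rf{est_b_2} it is bounded by $\kappa\|\nabla w\|_2^2$ with $\kappa:=K\sup_t\|V\|_{\X}<1$ from \rf{K}. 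Arranging the construction in Theorem \ref{main} so that this inequality passes to the weak limit (via smooth retarded approximations, inherited by weak lower semicontinuity and the continuity of $\mathfrak b$ from Remark \ref{dual_space}) yields
\[
\frac{d}{dt}\|w(t)\|_2^2+2(1-\kappa)\|\nabla w(t)\|_2^2\leqslant 0,
\]
so $\|w(t)\|_2$ is monotonically nonincreasing and $\|\nabla w\|_2\in L^2(0,\infty)$.

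Next I would apply Fourier splitting on the time-dependent ball $B(t)=\{\xi:|\xi|\leqslant \rho(t)\}$ with $\rho(t)^2=g/(1+t)$. Plancherel gives $\|\nabla w\|_2^2\geqslant \rho(t)^2\bigl(\|w\|_2^2-\int_{B(t)}|\hat w|^2\,d\xi\bigr)$. Choosing $g$ so that $\beta:=2(1-\kappa)g$ is large and multiplying by the integrating factor $(1+t)^\beta$ converts the energy inequality into
\[
\frac{d}{dt}\bigl[(1+t)^\beta\|w(t)\|_2^2\bigr]\leqslant\beta(1+t)^{\beta-1}\!\int_{B(t)}|\hat w(\xi,t)|^2\,d\xi.
\]
The Duhamel representation of $\hat w$ writes $\hat w(\xi,t)=\se^{-|\xi|^2 t}\hat w_0(\xi)$ plus a time-integrated term involving $|\xi|\,\widehat{V\!\otimes\! w+w\!\otimes\! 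V+w\!\otimes\! w}(\xi,s)$; the free part has $\int_{B(t)}$-mass vanishing as $t\to\infty$ by dominated convergence since $\rho(t)\to 0$. Because $V\notin L^2(\bbfR^3)$ in general, the naive bound $|\widehat{V\!\otimes\! w}(\xi)|\leqslant\|V\!\otimes\! w\|_1$ is unavailable; instead I would combine Hausdorff--Young with Hölder (or use the multiplier framework of Remark \ref{rem:multip}) to establish
\[
\int_{|\xi|\leqslant \rho}|\widehat{V\!\otimes\! w}(\xi,s)|^2\,d\xi\leqslant C\rho^{2}\|V(s)\|_{\X}^{2}\|w(s)\|_{2}^{2},
\]
together with an analogous estimate for $w\!\otimes\! w$ using $w\in L^2\cap\dot H^1\hookrightarrow L^6$. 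Combined with the $L^\infty$-control on $\|w\|_2$, this forces $\int_{B(t)}|\hat w(\xi,t)|^2\,d\xi=o(1)$; inserting into the weighted inequality and integrating in $t$ gives $\|w(t)\|_2=o(1)$.

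\emph{Main obstacle.} The crux is controlling the low-frequency part of $\hat w$ without the crutch $V\in L^2$: one must exploit that $V$ lies in the scaling-invariant space $\X$, gaining the needed power of $\rho$ from the volume of $B(t)$ via Hausdorff--Young rather than via a direct $L^1$ bound. A secondary subtlety is that this Fourier-side estimate must first be established for a smooth approximating sequence and then transported to the weak limit produced in Theorem \ref{main}.
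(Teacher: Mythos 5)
Your overall architecture (energy inequality for $w$, then Fourier splitting) matches the paper's in spirit, and the first half — deriving $\frac{d}{dt}\|w\|_2^2+2(1-\kappa)\|\nabla w\|_2^2\leqslant 0$ at the approximation level from \rf{b_zero}, \rf{est_b_2} and \rf{K} — is exactly what the paper does. But the second half, where the actual decay is extracted, has gaps that the paper's argument is specifically engineered to avoid. First, your scheme rests on the pointwise Duhamel representation of $\widehat{w}(\xi,t)$, which is not available for the Leray-type weak solution produced by the Galerkin construction; "establish it for a smooth approximating sequence and transport to the weak limit" is where most of the paper's work lives. The paper never uses a mild formulation of $w$: it proves a \emph{generalized energy inequality} (Theorem \ref{th:ex1}) by testing the Galerkin system against $E(t)P_m(w_m\ast\psi\ast\psi)$, which in turn requires strong convergence $w_m\to w$ in $L^2([0,T],L^2(\bbfR^3))$ (Lemma \ref{conv_4}) and a divergence-free wavelet basis with uniform $W^{1,p}$ projection bounds (Lemma \ref{est_proj}). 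Second, your key estimate $\int_{|\xi|\leqslant\rho}|\widehat{V\otimes w}|^2\,d\xi\leqslant C\rho^2\|V\|_{\X}^2\|w\|_2^2$ does not follow from the standing assumption \rf{hardy}: the multiplier property only controls $\|Vg\|_2$ for $g\in\dot H^1_\sigma$, which places $\widehat{Vw}$ in $L^2$ with no gain of a power of $\rho$. The bound is recoverable for $\X=L^{3,\infty}_\sigma$ via Lorentz--H\"older and Hausdorff--Young, but Theorem \ref{main2} is asserted for every $\X$ satisfying \rf{hardy}, including Morrey spaces where this route is unavailable.

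Third, even granting both points, the arithmetic does not close: in the Duhamel term one has $\int_{B(t)}\bigl|\int_0^t \se^{-|\xi|^2(t-s)}|\xi|\,\widehat{(\cdots)}\,ds\bigr|^2 d\xi\leqslant \rho^2\, t\int_0^t\int_{B(t)}|\widehat{(\cdots)}|^2\,d\xi\,ds\leqslant C\rho^4 t^2$, and with $\rho^2\sim g/(1+t)$ the factor $t^2$ exactly cancels the gain, yielding only $\limsup_{t\to\infty}\|w(t)\|_2^2\leqslant C$ rather than $0$ (the classical $L^1$ bound $\|w\otimes w\|_1=\|w\|_2^2$, which gives the extra volume factor $\rho^3$ in Schonbek's original argument, has no analogue here precisely because $V\notin L^2$). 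The paper sidesteps the need to estimate $\widehat{w}$ near $\xi=0$ altogether: it splits the \emph{norm} as $\|\widehat w\|_2\leqslant\|\check\vf\,\widehat w\|_2+\|(1-\check\vf)\widehat w\|_2$ with $\check\vf(\xi)=\se^{-|\xi|^2}$. The high-frequency piece is treated by Fourier splitting with $E(t)=(1+t)^\alpha$, where the weight satisfies $|1-\check\vf(\xi)|\leqslant|\xi|^2$ on the splitting ball, so the ball contributes $G^4(t)\sim(1+t)^{-2}$ with no representation formula needed, and the remaining terms are controlled by $\int_s^\infty\|\nabla w\|_2^2\,d\tau$. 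The low-frequency piece equals $c\|\vf\ast w(t)\|_2$ and is shown to vanish by taking $\psi(\tau)=\se^{(t-\tau)\Delta}\vf$ in the generalized energy inequality, again using only the finiteness of the Dirichlet integral. You would need to replace your Duhamel step by some device of this kind for the proof to go through.
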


For the proofs of Theorems \ref{main} and \ref{main2},  denote by $u=u(x,t)$ a solution of the Navier--Stokes system \rf{NS eq}--\rf{NS in d} with  external force $F=F(x,t)$ and  initial data $u_0=V_0 +w_0$, where $w_0\in L^2_\sigma (\bbfR^3)$. Then, the functions $w(x,t)=u(x,t)-V(x,t)$ and $\pi (x,t)=p(x,t)-p_V(x,t)$ (here, $p_V$ is a pressure associated with the velocity field $V$) satisfy the perturbed  initial value problem
\begin{align}
   \label{w:eq} w_t-\Delta w+(w\cdot  \nabla )w+ (w \cdot \nabla )V + (V \cdot \nabla )w +\nabla \pi &=0,\\
   \label{w:d}   {\rm div}\  w&=0,\\
   \label{w:ini} w(x,0)&=w_0(x)\ .
\end{align} 

Thus, our main goal  is to construct a weak solution $w$ of problem \rf{w:eq}--\rf{w:d} and  show its $L^2$-decay to zero as  $t \rightarrow \infty $. First, recall   the following  standard definition.
\begin{definition}\label{weak_sol}
A vector field $w = w(x,t)$ is called a weak solution to problem \rf{w:eq}--\rf{w:ini} if it belongs to the classical energy space  
\begin{equation}\label{XT}
  X_T= C^{\infty }_w \big( [0,T], L^2_\sigma (\bbfR^3)\big)\cap L^2\big([0,T], \dot H_\sigma^1(\bbfR^3)\big) 
\end{equation}
and if 
\begin{equation}\label{weak}
    \begin{split}
     \big\langle  w(t), \vf (t)  \big\rangle +&\int_s^t \Big[  \big\langle \nabla w, \nabla \vf \big\rangle + \big\langle w\cdot  \nabla w, \vf \big\rangle - \big\langle ( w \cdot  \nabla )\vf , V \big\rangle + \big\langle (V \cdot \nabla )w, \vf \big\rangle  \Big] \ud \tau \\
 &= \big\langle w(s), \vf (s)\big\rangle + \int_s^t \big\langle w, \vf_\tau \big\rangle \ud \tau
    \end{split}
\end{equation}
for all $t \geqslant s \geqslant  0$ and all $\vf \in C \big([0, \infty), H_\sigma^1(\bbfR^3)\big) \cap C^1\big([0,\infty), L^2_\sigma (\bbfR^3)\big)$, where $\langle\cdot, \cdot \rangle $ is the  inner product in $L^2_{\sigma } (\bbfR^3 )$. 
\end{definition}

Theorems \ref{main} and \ref{main2} are immediate consequences of the following result.

\begin{theorem}\label{th:ex}
For every $w_0 \in L^2_\sigma (\bbfR^3 )$  and each $T>0$, problem \rf{w:eq}--\rf{w:ini} has a weak solution  $w \in {X}_T$ $ ($cf. \rf{XT}$)$ for which  the strong energy inequality
\begin{equation}\label{en:inq}
   \|w(t)\|_2^2+ 2\big( 1-K\sup_{t>0}\| V(t)\|_{\X}\big) \int_s^t \|\nabla w(\tau )\|_2^2 \ud \tau \leqslant \|w (s)\|_2^2
\end{equation}
holds true for almost all $s\geqslant 0$, including $s=0$ and all $t\geqslant s$ and, which satisfies \[\lim_{t\rightarrow \infty } \|w(t)\|_2 = 0.\]
\end{theorem}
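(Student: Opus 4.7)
My plan is to combine a Galerkin--Leray approximation with an energy estimate tailored to the hypothesis \rf{hardy}, and then to extract the large-time decay by the Fourier splitting method of Schonbek.

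I would first construct approximate solutions $w_n$ by a standard mollification/spectral cutoff of the nonlinearity in \rf{w:eq}--\rf{w:ini}, solvable globally since the truncated system is essentially a (semi)linear parabolic equation. Testing $w_n$ against itself, the two cubic pieces $\mathfrak{b}(w_n,w_n,w_n)$ and $\mathfrak{b}(V,w_n,w_n)$ vanish by the antisymmetry \rf{b_zero}, leaving only the coupling term $\mathfrak{b}(w_n, V, w_n)$, which is controlled by assumption \rf{est_b_2}:
\begin{equation*}
   \tfrac12 \tfrac{d}{dt}\|w_n\|_2^2 + \|\nabla w_n\|_2^2 \,\leqslant\, K \sup_{t>0}\|V(t)\|_{\X}\,\|\nabla w_n\|_2^2.
\end{equation*}
Absorbing the right-hand side using \rf{K} and setting $\alpha := 1 - K\sup_{t>0}\|V(t)\|_{\X} > 0$ yields a uniform bound of $w_n$ in $L^\infty(0,T;L^2_\sigma)\cap L^2(0,T;\dot H^1_\sigma)$.

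Next I would pass to the limit. The uniform bounds give $w_n \rightharpoonup w$ weak-$*$ in $L^\infty(L^2_\sigma)$ and weakly in $L^2(\dot H^1_\sigma)$. From the equation one obtains $\partial_t w_n$ bounded in $L^{4/3}(0,T; H^{-s}_{loc})$ for $s$ large enough, whence Aubin--Lions delivers strong convergence $w_n \to w$ in $L^2_{loc}((0,T)\times\bbfR^3)$; this suffices to pass to the limit in the genuinely nonlinear cubic term $\mathfrak{b}(w_n,\vf,w_n)$ against compactly supported tests $\vf$. The two linear-in-$w$ coupling terms with $V$ pass to the limit thanks to Remark \ref{dual_space}. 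In the limit $w \in X_T$ satisfies \rf{weak}, and the strong energy inequality \rf{en:inq} follows from lower semicontinuity of $\|\cdot\|_2$ and $\|\nabla \cdot\|_2$ under weak limits, valid for every $t$ and almost every $s\geqslant 0$; the case $s=0$ additionally requires the standard strong right-continuity $\|w(t)-w_0\|_2 \to 0$ as $t\to 0^+$ inherited from the approximation.

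The main obstacle is the asymptotic decay $\|w(t)\|_2 \to 0$, which I would attack by Fourier splitting. For a radius $r(t)>0$ to be chosen, Plancherel yields
\begin{equation*}
   \|\nabla w(t)\|_2^2 \,\geqslant\, r(t)^2 \|w(t)\|_2^2 - r(t)^2 \int_{|\xi|\leqslant r(t)}|\widehat w(\xi,t)|^2 \ud\xi,
\end{equation*}
which inserted into \rf{en:inq} gives
\begin{equation*}
  \tfrac{d}{dt}\|w\|_2^2 + 2\alpha r(t)^2 \|w\|_2^2 \,\leqslant\, 2\alpha r(t)^2 \int_{|\xi|\leqslant r(t)}|\widehat w|^2 \ud\xi.
\end{equation*}
Choosing $r(t)^2 = c/(e+t)$ with $c$ large and using the integrating factor $(e+t)^{2\alpha c}$, the decay reduces to showing that the low-frequency integral shrinks faster than $(e+t)^{1-2\alpha c}$. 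This I would obtain from the Duhamel representation
\begin{equation*}
  w(t) = \se^{t\Delta}w_0 - \int_0^t \se^{(t-s)\Delta}\,\P\,\mathrm{div}\bigl(w\otimes w + w\otimes V + V\otimes w\bigr)(s)\,\ud s,
\end{equation*}
which leads to a pointwise bound on $|\widehat w(\xi,t)|$ for $|\xi|$ small, followed by a bootstrap. The delicate point is estimating the Fourier transforms of the mixed tensors $w\otimes V$ and $V\otimes w$ under only the multiplier-type hypothesis $V\in\X$ (rather than, say, $V\in L^1$); I expect one must again invoke \rf{hardy} to keep the ``extra'' contribution on the same footing as the standard Navier--Stokes cubic term, and perform a two-step bootstrap producing first an algebraic rate and then the desired $o(1)$.
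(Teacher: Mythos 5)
Your existence part is essentially the paper's: a Galerkin/approximation scheme, cancellation of $\mathfrak{b}(w_n,w_n,w_n)$ and $\mathfrak{b}(V,w_n,w_n)$, absorption of $\mathfrak{b}(w_n,V,w_n)$ via \rf{est_b_2} and \rf{K}, compactness, and lower semicontinuity for \rf{en:inq}. That half is sound.

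The decay argument, however, has two genuine gaps. First, you pass from the integral inequality \rf{en:inq} to the differential inequality $\tfrac{d}{dt}\|w\|_2^2+2\alpha r(t)^2\|w\|_2^2\leqslant\dots$ and then multiply by the integrating factor $(e+t)^{2\alpha c}$. A weak solution is only known to satisfy \rf{en:inq} in integrated form, and the weighted inequality $E(t)\|w(t)\|_2^2\leqslant E(s)\|w(s)\|_2^2+\int_s^t E'\|w\|_2^2\,\ud\tau-2\alpha\int_s^t E\|\nabla w\|_2^2\,\ud\tau$ does \emph{not} follow from \rf{en:inq}; it must be proved at the level of the approximations and passed to the limit. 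This is exactly the content of the paper's generalized energy inequality \rf{gen_energy} (Theorem \ref{th:ex1}), whose proof is the technical heart of the argument: it requires testing the Galerkin system with $E(t)P_m(w_m\ast\psi\ast\psi)$, hence a basis whose $L^2$-projections are bounded on $W^{1,p}_\sigma$ (the Battle--Federbush wavelets of Lemma \ref{est_proj}) and the strong $L^2([0,T],L^2)$ convergence of Lemma \ref{conv_4}. Your proposal omits all of this.

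Second, and more fundamentally, your control of the low frequencies via a pointwise bound on $\widehat{w}(\xi,t)$ from the Duhamel formula cannot work here. The classical bound $|\widehat{w\otimes w}(\xi)|\leqslant\|w\|_2^2$ has no analogue for the mixed terms: $V$ is only in a scale-invariant multiplier space $\X$ (e.g.\ $L^{3,\infty}$), so $Vw\notin L^1(\bbfR^3)$ in general and $\widehat{V\otimes w}$ admits no useful $L^\infty_\xi$ estimate near $\xi=0$; hypothesis \rf{hardy} is an $\dot H^1\times\dot H^1$ duality bound and yields nothing pointwise in Fourier space. (Moreover, identifying the weak solution with its Duhamel representation requires justification.) The paper avoids this entirely: the low frequencies are estimated by Corollary \ref{c.low}, i.e.\ the generalized energy inequality with the heat-kernel test function $\psi(\tau)=\se^{(t-\tau)\Delta}\vf$, after which \emph{every} term involving $V$ is absorbed by \rf{hardy} into $C\sup_{t>0}\|V(t)\|_{\X}\int_s^\infty\|\nabla w\|_2^2\,\ud\tau$, a tail made small by choosing $s$ large. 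Relatedly, your expectation of "first an algebraic rate, then $o(1)$" is not attainable for generic $w_0\in L^2_\sigma$: no uniform algebraic rate holds, and the paper's conclusion is only $\|w(t)\|_2\to 0$.
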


\begin{rem}
Assuming that $w_0 \in L^2_\sigma (\bbfR^3 ) \cap L^p(\bbfR^3)$ with some $1\leq p<2$, we expect an algebraic decay rate of the quantity $\|w(t)\|_2$ as $t\to\infty$ as in the case of the $L^2$-decay of weak solutions to the Navier-Stokes equations
(see {\it e.g.}~\cite{S,BM3,ORS}). We do not attempt to make such improvements since they are more-or-less standard.
\end{rem}

The proof of Theorem \ref{th:ex} will be split into two parts corresponding to Theorem \ref{main} and Theorem \ref{main2}, which will be developed in Sections \ref{existence} and \ref{splitting}, respectively. In Section ~\ref{existence}, using Galerkin approximations, we show the existence of a solution to equation \rf{weak} satisfying strong energy inequality \rf{en:inq}. 
Next,  in Section \ref{sec:GEI}, we show  that this solution satisfies a class of general energy estimates. In Section \ref{splitting},  suitable test functions are used in  generalized energy inequalities combined with a modified  Fourier splitting technique \cite{ORS} to yield the convergence $\| w (t) \|_2 \rightarrow 0$ as $t \to \infty$.  

First, however, we discuss some consequences of Theorems \ref{main} and  \ref{main2}. In the following series of remarks we describe classes of global-in-time mild solutions, which are $L^2$-globally asymptotically stable in the sense discussed above.

\begin{rem}\label{rem:stat}
If an external force $F$ in equations \rf{NS eq} is independent of time, one may expect system \rf{NS eq}--\rf{NS div} to have stationary solutions. This is indeed the case and there are several results on the existence of small stationary solutions in scaling invariant spaces, see {\it e.g.}~\cite{KY95s,Yamazaki,BM95,CanKarch,CK05,KarchP, BBIS11}.
By Theorems \ref{main} and \ref{main2}, if these stationary solutions belong to a Banach space $\X$ and  satisfy our standing assumptions \rf{hardy}--\rf{K},  they are asymptotically stable under arbitrary large $L^2$-perturbations.

In the case when  $V$ is time-independent,  the linearization at zero solution of the perturbed problem \rf{w:eq}--\rf{w:ini} generates an analytic semigroup of linear operators on $L^2_\sigma(\bbfR^3)$, see \cite[Ch.~4]{KarchP} for an example of such a reasoning in the case of singular stationary solutions. This observation allows to apply ideas of Borchers and Miyakawa \cite{BM3,BM95} to show the decay of $\|w(t)\|_2$. This approach cannot be used in the case of time dependent coefficients in  problem \rf{w:eq}--\rf{w:ini}, hence we needed to develop a new technique to overcome  technical obstacles in the proof of Theorem~\ref{main2}.
\end{rem}

\begin{rem}\label{rem:period}
If the external force  $F=F(x,t)$ is small in a suitable sense and time-periodic (or almost periodic in $t$), then the Navier-Stokes problem \rf{NS eq}--\rf{NS in d} has a unique global-in-time mild solution in the space $C_w([0,\infty), L^{3,\infty}_\sigma(\bbfR^3))$. This solutions is  time-periodic (almost periodic in $t$, respectively), see \cite[Cor.~1.2]{Yamazaki} 
and Subsection \ref{subsec:M} for results and references on the existence of global-in-time solutions to the Navier-Stokes equations in the Marcinkiewicz space $L^{3,\infty}_\sigma(\bbfR^3)$. Choosing $\X = L^{3, \infty }_\sigma (\bbfR^3)$ in Theorems \ref{main} and \ref{main2} we obtain that, for sufficiently small external forces, these solutions are also asymptotically stable under arbitrary large divergence free initial $L^2$-perturbations.
\end{rem}

\begin{rem}\label{rem:self}
Suppose now that an initial datum $V_0\in L^{3,\infty}_\sigma(\bbfR^3) $ (or, more generally, in a suitable Morrey spaces) is sufficiently small and homogeneous of degree $-1$. Then, the Navier-Stokes problem
\rf{NS eq}--\rf{NS in d} has a global-in-time mild {\it self-similar} solution {\it i.e.}~a solution of the form
\begin{equation*}
     V(x,t) =\frac{1}{\sqrt{t}}V\left(\frac{x}{\sqrt{t}}\right) \in C_w\big([0,\infty), L^{3,\infty}_\sigma(\bbfR^3)\big),
\end{equation*}
see \cite{Cann-Book,Cannone,Lemarie} for a review of the theory of self-similar solutions to the Navier-Stokes equations.
By Theorems \ref{main} and \ref{main2}, 
problem \rf{NS eq}--\rf{NS in d}  with  the initial condition $u_0=V_0+w_0$, where  $w_0\in L^2_\sigma(\bbfR^3)$ is arbitrary, and  with a suitable and small external force has a global-in-time distributional solution of the form $u(x,t)=t^{-1/2}V(x/\sqrt{t})+w(x,t)$, where 
$\|w(t)\|_2\to 0$ as $t\to\infty$.
\end{rem}

Our main result combined with  Kato's  theorems \cite[Thm.~4 and 4']{Kato} yields the following corollary.

\begin{corollary}\label{cor:Cald}
Fix $p\in [2,3)$. For every $u_0\in L^p_\sigma(\bbfR^3)$ and $F\equiv 0$, the Navier-Stokes problem  \rf{NS eq}--\rf{NS in d}
has a global-in-time {\rm (}distributional{\rm )} solution $u=u(x,t)$. This solution can be written in the form $u(x,t)=V(x,t)+w(x,t)$
where $V\in C([0,\infty), L^3_\sigma(\bbfR^3))$ is a mild solution of the Navier-Stokes problem \rf{NS eq}--\rf{NS in d} 
and $w\in X_T$ for each $T>0$ {\rm (}see \rf{XT}{\rm )}
is a weak solution of the perturbed problem \rf{w:eq}--\rf{w:ini}. These two vector fields satisfy
\begin{equation}\label{dec:Cald}
      \|V(t)\|_3\to 0\quad\text{and}\quad \|w(t)\|_2\to 0 \qquad \text{as}\quad t\to \infty.
\end{equation}
\end{corollary}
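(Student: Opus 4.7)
The plan is to split $u_0\in L^p_\sigma(\R^3)$ as a sum $V_0+w_0$ with $V_0$ small in $L^3_\sigma$ and $w_0\in L^2_\sigma$, produce a global mild solution $V$ from $V_0$ via Kato's theorems, and then quote Theorems~\ref{main} and~\ref{main2} to obtain the perturbation $w$ with the claimed decay.

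\textbf{Step 1: Decomposition of the initial datum.} For a threshold $N>0$ to be chosen, put
\[
V_0 := \P\bigl(u_0\,\mathbf{1}_{\{|u_0|\leqslant N\}}\bigr), \qquad w_0 := \P\bigl(u_0\,\mathbf{1}_{\{|u_0|>N\}}\bigr),
\]
where $\P$ is the Leray projector. Since $u_0$ is already solenoidal, $V_0+w_0=\P u_0 =u_0$. The $L^q$-boundedness of $\P$ for $1<q<\infty$, combined with the pointwise bounds $|u_0|^3\leqslant N^{3-p}|u_0|^p$ on $\{|u_0|\leqslant N\}$ and $|u_0|^2\leqslant N^{2-p}|u_0|^p$ on $\{|u_0|>N\}$ (valid because $p<3$ and $p\geqslant 2$, respectively), yields
\[
\|V_0\|_3 \leqslant C\,N^{1-p/3}\,\|u_0\|_p^{p/3}, \qquad \|w_0\|_2 \leqslant C\,N^{1-p/2}\,\|u_0\|_p^{p/2}.
\]
Thus $V_0\in L^p_\sigma(\R^3)\cap L^3_\sigma(\R^3)$ with $\|V_0\|_3$ as small as we wish, while $w_0\in L^2_\sigma(\R^3)$ is simply finite.

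\textbf{Step 2: Mild solution and verification of hypotheses \rf{V_space}--\rf{K}.} For $N$ sufficiently small, Kato's Theorem~4 in \cite{Kato} provides a unique global mild solution $V\in C([0,\infty),L^3_\sigma(\R^3))$ of \rf{NS eq}--\rf{NS in d} with datum $V_0$ and $F\equiv 0$, fulfilling $\sup_{t\geqslant 0}\|V(t)\|_3\leqslant C\|V_0\|_3$. Because $V_0\in L^p_\sigma(\R^3)$ with $p<3$, the companion Theorem~4' of \cite{Kato} gives $\|V(t)\|_3\to 0$ as $t\to\infty$. Taking $\X=L^3_\sigma(\R^3)$, condition \rf{V_space} is part of Kato's statement, and \rf{hardy} reduces to the standard H\"older--Sobolev estimate
\[
\left|\int_{\R^3}(g\cdot\nabla)h\cdot V\,\ud x\right|\leqslant \|V\|_3\,\|g\|_6\,\|\nabla h\|_2 \leqslant K\,\|V\|_3\,\|\nabla g\|_2\,\|\nabla h\|_2
\]
via $\dot H^1(\R^3)\hookrightarrow L^6(\R^3)$; condition \rf{K} is enforced by shrinking $N$ further so that $K\,C\|V_0\|_3<1$.

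\textbf{Step 3: Application of the main theorems.} Theorems~\ref{main} and~\ref{main2} now apply to this $V$ and the perturbation $w_0\in L^2_\sigma(\R^3)$, producing $w\in X_T$ for every $T>0$ satisfying \rf{w:eq}--\rf{w:ini} with $\|w(t)\|_2\to 0$. Setting $u:=V+w$ yields the required distributional solution of \rf{NS eq}--\rf{NS in d} with initial datum $u_0$, and the pair $(V,w)$ fulfils \rf{dec:Cald}.

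\textbf{Main obstacle.} The only delicate point is securing simultaneously $\|V_0\|_3\ll 1$ and $\|w_0\|_2<\infty$ from a single datum in $L^p_\sigma(\R^3)$; the pointwise truncation of Step~1 achieves exactly this by playing the two endpoint conditions $p<3$ and $p\geqslant 2$ against one another. Once the decomposition is in place, the remaining steps are routine applications of the cited results.
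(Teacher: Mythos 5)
Your proposal is correct and follows essentially the same route as the paper's own proof: truncate $u_0$ at a small threshold so that the bounded part lands in $L^3_\sigma(\bbfR^3)$ with small norm (feeding Kato's Theorems 4 and 4$'$ and the H\"older--Sobolev verification of \rf{hardy} with $\X=L^3_\sigma(\bbfR^3)$), the remainder lands in $L^2_\sigma(\bbfR^3)$, and Theorems \ref{main} and \ref{main2} finish the argument. The only immaterial differences are that you zero out the large part with an indicator instead of capping it at the threshold, and that you spell out the quantitative bounds on $\|V_0\|_3$ and $\|w_0\|_2$ which the paper leaves implicit.
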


The existence of  global-in-time weak solutions to problem
 \rf{NS eq}--\rf{NS in d},  with  $u_0\in L^p_\sigma(\bbfR^3)$ and  $p\in [2,3)$, was established by  Calder\'on \cite{C90}. Here, we improve Calder\'on's result by showing   
that  these solutions decay as $t \to \infty$ in the sense expressed in \rf{dec:Cald}. The proof of this corollary is given in Subsection \ref{subsec:Lp} after discussing  Hardy type inequality \rf{hardy} for  $L^p$-spaces.

\begin{rem}\label{rem:Kozono}
Under the assumption that the solution $V=V(x,t)$ belongs to the Serrin class: $V\in L^\alpha(0, \infty; L^q(\R^3))$ 
 for $2/\alpha+3/q=1$ with $3<q\leq \infty$, its global asymptotic $L^2$-stability was shown by Kozono \cite{K00}.
Corollary \ref{cor:Cald} can be considered as an extension of the Kozono result to the limit case $q=3$ and $\alpha=\infty$.
\end{rem}

In the next section, we explore  examples of   norm-scale-invariant Banach spaces $\X$, that is spaces satisfying $\| \lambda f(\lambda \cdot )\|_{\mathcal X} =\|f\|_{\mathcal X}$ for all $f \in \X$ and for all $\lambda >0$, for which inequality \rf{hardy} (or, equivalently, inequalities \rf{est_b_1}, and \rf{est_b_2}) holds true. We also recall results on the global-in-time existence of mild solutions $V\in C_w\big([0,\infty), \X\big)$ to problem \rf{NS eq}--\rf{NS in d} which satisfy our standing assumptions \rf{V_space}--\rf{K}.

\section{Hardy-type inequalities}\label{hardy_ineq}
\setcounter{equation}{0}

The following theorem contains a brief introduction to the theory of global-in-time mild solutions to the Navier-Stokes 
problem \rf{NS eq}--\rf{NS in d}. These solutions satisfy assumptions \rf{V_space}--\rf{K}, hence,
are they globally asymptotically stable under arbitrary $L^2$-perturbation in the sense described in Theorems \ref{main} and \ref{main2}. 

\begin{theorem}\label{thm:hardy}
There exists a constant $K>0$ such that  the following inequality holds true
\begin{equation}\label{hardy1}
   \bigg|\int_{\bbfR^3} W\cdot ( g \cdot \nabla ) h \ud x \bigg|\leqslant K \| W\|_{\X}\|\nabla  g \|_2 \| \nabla  h  \|_2,
\end{equation}
for all vector fields $g, h \in \dot H^1 (\bbfR^3)$ and all $W \in \X$, where $\X$ is one of the Banach spaces
\begin{itemize}
  \item $\X = \dot{H}^{{1}/{2}}_\sigma (\bbfR^3 )$ $($the homogeneous Sobolev space$)$,
  \item $\X = L^3_\sigma (\bbfR^3)$ $($the Lebesgue space$)$,
  \item $\X= \{ f \in L^\infty _{loc}(\bbfR^3 ): \|f\| = \sup_{x\in \X}|x| |f(x)| <\infty \}$ $($the weighted $L^\infty $-space$)$,
  \item $\X= {\mathcal{PM}}^2(\bbfR^3)$ $($the Le Jan-Sznitman space$)$,
  \item $\X= L^{3, \infty }_\sigma (\bbfR^3)$ $($the Marcinkiewicz space$)$,
  \item $\X = \dot{M}^3_p (\bbfR^3) $ for each $2< p \leqslant 3$ $($the Morrey space$)$.
 \end{itemize} 
In the case of each space $\X$, there exist constants  $\varepsilon >0$ and  $C>0$ such that 
for all $V_0\in \X$ such that $\|V_0\|_{\X}<\varepsilon$, the Navier-Stokes problem 
\rf{NS eq}--\rf{NS in d} with the initial datum $u_0=V_0$ and the external force $F\equiv 0$ has a global-in-time solution in the space $C_w([0,\infty),\X)$ which satisfies $\sup_{t>0}\|V(t)\|_{\X}\leq 
C\|V_0\|_{\X}$.
\end{theorem}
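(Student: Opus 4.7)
The plan is to split the proof into two independent parts: the multiplier inequality \rf{hardy1} for each listed choice of $\X$, and the existence (with smallness persistence) of global-in-time mild solutions in $C_w([0,\infty),\X)$.

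For \rf{hardy1}, the starting point is the Cauchy--Schwarz inequality
\[
 \left|\int_{\bbfR^3} W \cdot (g \cdot \nabla) h \, \ud x \right| \leqslant \|Wg\|_2 \,\|\nabla h\|_2,
\]
which reduces the problem to the pointwise-multiplier bound $\|Wg\|_2 \leqslant K \|W\|_{\X}\|\nabla g\|_2$, i.e.\ to the continuous embedding $\X \hookrightarrow X_1(\bbfR^3)$ recalled in Remark \ref{rem:multip}. I would then verify this embedding one space at a time. For $\X = L^3_\sigma$, combine H\"older's inequality with the Sobolev embedding $\dot H^1 \hookrightarrow L^6$. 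The case $\X = \dot H^{1/2}_\sigma$ reduces to the previous one via $\dot H^{1/2} \hookrightarrow L^3$. For the weighted space with norm $\sup_x |x||W(x)|$, use the classical three-dimensional Hardy inequality $\||x|^{-1} g\|_2 \leqslant 2\|\nabla g\|_2$, so that $\|Wg\|_2 \leqslant \|W\|_{\X}\||x|^{-1}g\|_2$. For $\X = L^{3,\infty}_\sigma$, use H\"older in Lorentz spaces together with the refined Sobolev embedding $\dot H^1 \hookrightarrow L^{6,2}$. For the Morrey case $\dot M^3_p$ with $2 < p \leqslant 3$, a Morrey--Sobolev estimate gives the bound; alternatively, one can use the continuous inclusion $L^{3,\infty} \hookrightarrow \dot M^3_p$ for $p<3$ to reduce to the weak-$L^3$ case. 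Finally, for $\X = \mathcal{PM}^2$, I would work on the Fourier side: writing $\widehat{Wg} = \hat W \ast \hat g$ and using $|\hat W(\eta)|\leqslant \|W\|_{\mathcal{PM}^2}|\eta|^{-2}$, the desired estimate reduces via Plancherel to a weighted convolution bound, essentially a Fourier version of Hardy's inequality.

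For the existence of mild solutions $V\in C_w([0,\infty),\X)$ with $\sup_{t>0}\|V(t)\|_{\X}\leqslant C\|V_0\|_{\X}$, I would invoke a Kato-type contraction argument applied to the Duhamel formulation
\[
 V(t) = \se^{t\Delta} V_0 - \int_0^t \se^{(t-s)\Delta}\,\P\,\nabla\cdot\big(V(s)\otimes V(s)\big)\,\ud s,
\]
run in an auxiliary space built from time-weighted $L^q$-norms adapted to the scaling of $\X$. The bilinear estimate on the Duhamel operator is classical in each of the six spaces, and rather than reprove it I would cite the sources already listed in the paper, notably \cite{Kato,Cann-Book,Yamazaki,Lemarie}, which collectively cover all six cases (with the weighted-$L^\infty$ space being a minor Cannone-type variant). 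Weak continuity in time with values in $\X$ then follows from standard heat semigroup continuity and the persistence estimate.

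The principal obstacle is the absence of a single unifying argument: each space requires its own tool (Sobolev, Hardy, Lorentz--H\"older, Morrey--Sobolev, or pseudomeasure calculus), so the proof is most naturally presented as a sequence of short self-contained cases. The $\mathcal{PM}^2$ case is the most delicate, since its multiplier estimate is genuinely Fourier-analytic and does not follow by embedding into any of the physical-space options; I would treat it last and in the most detail.
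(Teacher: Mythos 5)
Your overall strategy coincides with the paper's: inequality \rf{hardy1} is verified case by case with essentially the tools you name (H\"older plus Sobolev for $L^3$, the Hardy inequality for the weighted $L^\infty$ space, Lorentz--H\"older together with $\dot H^1\hookrightarrow L^{6,2}$ for $L^{3,\infty}$, a Fourier-side argument for $\mathcal{PM}^2$), and the existence statement is, exactly as in the paper, delegated to the classical references rather than reproved. Two of your choices differ harmlessly from the paper's: for $\dot H^{1/2}_\sigma$ you reduce to the $L^3$ case via the Sobolev embedding $\dot H^{1/2}(\R^3)\hookrightarrow L^3(\R^3)$, where the paper quotes \cite[Lemma 1]{LemGal}; and for $\mathcal{PM}^2$ you estimate $\widehat{Wg}=\widehat W\ast\widehat g$ after a Cauchy--Schwarz reduction, whereas the paper applies Parseval to the whole trilinear integral and then uses the Hausdorff--Young, H\"older and Sobolev inequalities in Lorentz spaces. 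Both variants can be made to work.

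The genuine problem is the Morrey case. Your ``alternative'' reduction via the inclusion $L^{3,\infty}\hookrightarrow\dot M^3_p$ runs in the wrong logical direction: a continuous embedding $\mathcal{X}_1\subset\mathcal{X}_2$ transfers an inequality of the form \rf{hardy1} from the \emph{larger} space $\mathcal{X}_2$ down to the smaller space $\mathcal{X}_1$ (because $\|W\|_{\mathcal{X}_2}\lesssim\|W\|_{\mathcal{X}_1}$), never upward. Since $\dot M^3_p$ is the largest space in the chain $\dot H^{1/2}_\sigma\subset L^3_\sigma\subset L^{3,\infty}_\sigma\subset\dot M^3_p$ --- the paper explicitly remarks that proving \rf{hardy1} for $\dot M^3_p$ alone would already cover all the other cases --- the weak-$L^3$ estimate gives you nothing for a general $W\in\dot M^3_p$. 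Your primary suggestion, that ``a Morrey--Sobolev estimate gives the bound,'' names the right object but leaves unproved precisely the only nontrivial point. The paper supplies it by duality: $\dot M^3_p=(\dot N^{3/2}_{p'})^*$ combined with the product estimate $\|g\,\nabla h\|_{\dot N^{3/2}_{p'}}\leqslant C\|\nabla h\|_2\,\|g\|_{\dot H^{1}}$ of Lemma \ref{hol_mor} (quoted from \cite[Prop.~21.1]{Lemarie} and \cite[Lemma~3.14]{SD}). You must either prove such a product (or Fefferman--Phong type multiplier) estimate or cite it; as written, the case that dominates all the others is the one left open.
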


We note that the proof of inequality \rf{hardy1}  is more-or-less standard (see Remark \ref{rem:multip}) and we sketch it for 
the completeness of the exposition.
Let us recall  the well-known continuous  embeddings 
\begin{equation*}
  \dot{H}^{{1}/{2}}_\sigma (\bbfR^3 )\subset
L^3_\sigma (\bbfR^3) \subset
 L^{3, \infty }_\sigma (\bbfR^3) \subset
 \dot{M}^3_p (\bbfR^3) \qquad  \text{for each} \quad 2< p \leqslant 3
\end{equation*}
as well as 
\begin{equation*}
   \{ f \in L^\infty _{loc}(\bbfR^3 ): \|f\| = \sup_{x\in \X}|x| |f(x)| <\infty \}  \subset
 L^{3, \infty }_\sigma (\bbfR^3)\quad\text{and}\quad
{\mathcal{PM}}^2(\bbfR^3)
\subset
 L^{3, \infty }_\sigma (\bbfR^3),
\end{equation*}
{\it cf.}~Remark \ref{rem:PM}.
Thus, it  would suffice to prove inequality \rf{hardy1}  in the case of the Morrey space
$ \dot{M}^3_p (\bbfR^3)$, solely. Nevertheless, we discuss separately each of the spaces embedded in the Morey, because this approach  emphasizes the  relations of our theorems with classical results on the global-in-time existence of small mild solutions to the Navier-Stokes equations.

Here, for simplicity of the exposition, we consider the Navier-Stokes problem \rf{NS eq}--\rf{NS in d} with $F\equiv 0$, however, 
global-in-time small mild solutions exist also
in the case of  small non zero external forces, see {\it e.g.}~the publications \cite{CannonePlan,Yamazaki,CanKarch,CK05} and references therein. 
In the following subsections,  other  references are provided  for results on the existence of solutions to the Navier-Stokes problem \rf{NS eq}--\rf{NS in d} in $C_w\big( [0, +\infty ) , \X \big)$, where $\X$ is one of the spaces defined in Theorem \ref{thm:hardy}. Our list of references is far from being complete since the existing literature is extensive.

\subsection{Homogeneous Sobolev space}

For $\X = \dot{H}^{{1}/{2}}_\sigma(\bbfR^3) $, the proof of the inequality 
\begin{equation*}
  \bigg|\int_{\bbfR^3} W\cdot ( g \cdot \nabla ) h \ud x \bigg|\leqslant K \| W\|_{\dot{H}^{{1}/{2}}_\sigma}\|\nabla  g \|_2 \| \nabla  h  \|_2
\end{equation*}
for all $f, g \in  \dot H^1 (\bbfR^3)$ and $W \in \dot{H}^{{1}/{2}}(\bbfR^3) $  can be found in a much more general setting 
{\it e.g.}~in \cite[Lemma 1]{LemGal}. Under suitable smallness assumptions on $u_0\in  \dot{H}^{{1}/{2}}_\sigma(\bbfR^3)$ with $F\equiv 0$, Fujita and Kato 
\cite{FK64} obtained existence of  solutions to the Navier-Stokes problem \rf{NS eq}--\rf{NS in d} in $C([0,\infty), \dot{H}^{{1}/{2}}_\sigma(\bbfR^3))$, satisfying  conditions \rf{V_space}--\rf{K}.

\subsection{Lebesgue space}\label{subsec:Lp}
In the case of the Lebesgue space $\X = L^3_\sigma (\bbfR^3)$, the H\"older and the Sobolev inequalities yield
\begin{equation*}
   \bigg| \int_{\bbfR^3}W\cdot (g \cdot \nabla ) h  \ud x \bigg|  \leqslant \| W\|_3 \| g \|_6\| \nabla  h\|_2  \leqslant K \| W\|_3 \| \nabla  g \|_2 \| \nabla  h\|_2 ,
\end{equation*}
for all $f, g \in  \dot H^1 (\bbfR^3)$ and $W \in L^3_\sigma (\bbfR^3)$.
The existence of a global-in-time solution to the Cauchy problem \rf{NS eq}--\rf{NS in d} with $u_0\in  L^3_\sigma (\bbfR^3)$ and 
$F\equiv 0$ was established by Kato \cite{Kato}. Kato's solutions  satisfy  suitable decay estimates, hence Theorems \ref{main} and \ref{main2} yield  our Corollary \ref{cor:Cald}, which  improves  Calder\'on's result \cite{C90}.
The proof of 
Corollary \ref{cor:Cald}  is as follows:

\begin{proof}[Proof of Corollary \ref{cor:Cald}] Let $p\in [2,3)$ and   $u_0\in L^p_\sigma(\bbfR^3)$.
For each  constant $R>0$  define
\begin{equation*}
u_{0,R}(x)=
\left\{
\begin{array}{ccc}
u_0(x)&\text{if}& |u_0(x)|\leq R\\
R&\text{if}& |u_0(x)|> R
\end{array}
\right.
\qquad\text{and}\qquad
u_0^R(x)=
\left\{
\begin{array}{ccc}
0&\text{if}& |u_0(x)|\leq R\\
u_0(x)-R&\text{if}& |u_0(x)|> R
\end{array}
\right.
\end{equation*}
Write    $u_0=V_0+w_0, $ where $V_0=\P (u_{0,R})$ and $w_0=\P (u_0^R)$. Here, $\P: L^2(\R^3)^3\to  L^2_\sigma(\R^3)$
is the Leray orthogonal projection on divergence free vector fields. This operator  can be extended to a bounded operator on $L^p(\R^3)^3$ for each $1<p<\infty$.

 Notice that  $V_0\in L^q_\sigma(\bbfR^3)$ for each $q\in [p,\infty)$. 
Apply  Kato's theorems  \cite[Thm.~4 and 4']{Kato} with sufficiently 
small  $R>0$ to obtain a global-in-time mild solution $V\in C([0,\infty), L^3_\sigma(\bbfR^3))$ 
of    \rf{NS eq}--\rf{NS in d}. This solution satisfies the standing assumptions 
\rf{V_space}--\rf{K} with $\X=L^3_\sigma(\bbfR^3)$ and  decays according to  the first relation in \rf{dec:Cald}.
Notice that  the truncated vector field $u_0^R$  is nonzero on a set of finite measure  hence $w_0=\P(u_0^R)\in L^2(\bbfR^3)$. 
The existence of a decaying solution $w=w(x,t)$ of the perturbed problem \rf{w:eq}--\rf{w:ini}  follows now by Theorems \ref{main} and \ref{main2}.
\end{proof}

\subsection{Weighted $L^\infty $-space}

Let $\X = \{ f \in L^\infty _{loc }(\bbfR^3 ): \|f\|_{\X } = \sup_{x\in \X}|x| |f(x)| <\infty \}$. Combine the Schwarz inequality, the definition of the space $\X$, and the following Hardy inequality ({\it cf.} \cite[Eq.~(1.14)]{L})
\begin{equation*}
    \int_{\bbfR^3} \frac{ |g(x)|^2}{|x|^2} \ud x \leqslant 4 \int_{\bbfR^3} |\nabla g|^2 \ud x \qquad \text{for all}\quad g \in  H^1 (\bbfR^3)
\end{equation*}
to  yield 
\begin{align*}
   \bigg| \int_{\bbfR^3}W\cdot ( g \cdot \nabla ) h \ud x \bigg|  &\leqslant \| W g \|_2\| \nabla  h \|_2  \leqslant \| W\|_{\X}\|  |\cdot |^{-1} g \|_2 \| \nabla h \|_2 \\
&\leqslant K\| W\|_{\X} \|\nabla  g \|_2 \| \nabla h \|_2
\end{align*}
for all $f, g \in  \dot H^1 (\bbfR^3)$ and $W \in \X$. The existence of global-in-time small mild solutions to the 
the Navier-Stokes problem  \rf{NS eq}--\rf{NS in d} in $\X$ was established by Cannone \cite[Sec.~4.3.1]{Cann-Book}, and Cannone and Planchon \cite{CP96}. Improvements of this result may be found in \cite{Mi00,Mi02,BV07}. Since  $\X\subset L^{3, \infty } (\bbfR^3)$, we postpone the further discussion of solutions to problem \rf{NS eq}--\rf{NS in d} in these spaces to Subsection \ref{subsec:M}.

\subsection{Le Jan-Sznitman space}
 The existence of global-in-time small singular solutions to the incompressible 
Navier-Stokes system with singular external forces  was established in  \cite{CanKarch}. In  \cite{CanKarch},  the authors also  studied the  asymptotic stability of  solutions in the  Banach  spaces 
\begin{equation*}
    {\mathcal{PM}}^2= \{ v\in \mathcal{S}' (\bbfR^3) : \widehat v \in L^1_{loc} (\bbfR^3) , \quad \| v\|_{\mathcal{PM}^2}=\textrm{ess}\sup_{\xi \in \bbfR^3} |\xi |^2 |\widehat v(\xi )| <\infty \}.
\end{equation*}
In particular,  the Slezkin--Landau singular stationary solutions,  which 
have been considered in \cite{KarchP} belong to this space.

Our results on the $L^2$-global asymptotic stability can be applied to solutions solutions from $C_w\big([0,\infty), {\mathcal{PM}}^2\big)$, as well.
Indeed, let us prove inequality \rf{hardy1} with $\X = \mathcal{PM}^2(\bbfR^3)$.  
Properties of the Fourier transform combined with the H\"older inequality in the Lorentz spaces (see {\it e.g.}~\cite{O'N}) yield 
\begin{align*}
   \bigg| \int_{\bbfR^3}W\cdot ( g \cdot \nabla ) h  \ud x \bigg|  &=\bigg| \int_{\bbfR^3}\widehat {W(\xi )}\cdot (\overline{\widehat{g \cdot \nabla h}})(\xi ) \ud \xi \bigg| \\
   &\leqslant \| W\|_{{\mathcal{PM}}^2}\int_{\bbfR^3} |\xi |^{-2}\big| \widehat{ g \cdot \nabla h}(\xi ) \big| \ud \xi \\
   &\leqslant \| W\|_{{\mathcal{PM}}^2}\| |\cdot |^{-2}\|_{L^{\frac{3}{2},\infty }}\| \widehat{g \cdot \nabla h }\|_{L^{3,1}}.
\end{align*}
It suffices to apply the Hausdorff-Young inequality in the Lorentz spaces \cite{K}:
\begin{equation*}
   \|\widehat f\|_{L^{p',r}} \leqslant C\| f\|_{ L^{p,r}}, \qquad \textrm{where}\ \ 1<p<2, \ 1\leqslant r<\infty , \ \textrm{and }\ \ \frac{1}{p'}+\frac{1}{p}=1, 
\end{equation*}
together with  the H\"older and Sobolev inequalities for Lorentz spaces. Hence it follows
\begin{equation}\label{PM:ineq}
   \begin{split}
   \bigg| \int_{\bbfR^3}W\cdot ( g \cdot \nabla ) h  \ud x \bigg|  &\leqslant C\| W\|_{{\mathcal{PM}}^2} \| g  \cdot \nabla h\|_{L^{{3}/{2}, 1}}\\
   & \leqslant C \| W\|_{{\mathcal{PM}}^2} \| g \|_{L^{6,2} }\|\nabla h \|_{L^{2,2}} \\
   &\leqslant K\| W\|_{{\mathcal{PM}}^2}\|\nabla  g \|_2 \| \nabla h \|_2 
  \end{split}
\end{equation}
for all $g, h \in \dot{H}^1 (\bbfR^3)$ and $W \in \mathcal{PM}^2(\bbfR^3)$.

\begin{rem}\label{rem:PM}
As a byproduct of the estimates  in this subsection, we obtain 
the following  short proof of the embedding 
 $\mathcal{PM}^2(\bbfR^3) \subset L^{3,\infty}(\R^3)$. Repeating the reasoning which leads to the first inequality in
\rf{PM:ineq} we find  a constant $C>0$ such that for all $W\in \mathcal{PM}^2(\bbfR^3)$ and all $\varphi\in L^{3/2,1}(\R^3)$
we have \[ \left|\int_{\R^3} W\varphi \ud x\right| \leqslant C\| W\|_{{\mathcal{PM}}^2} \| \varphi\|_{L^{{3}/{2}, 1}}.\]  Hence, the distribution $W$ defines a continuous linear functional on 
$ L^{3/2,1}(\R^3)$ and, as a consequence, we have   $W\in  L^{3,\infty}(\R^3)$.

\end{rem}

\subsection{Marcinkiewicz space}\label{subsec:M}
The proof of  inequality \rf{hardy1} with $\X = L^{3, \infty } (\bbfR^3)$ involves  the  H\"older  and  Sobolev  inequalities in the Lorentz $L^{p,q}$-spaces (see {\it e.g.} \cite{O'N} and \cite{B}, resp.) as follows
\begin{align*}
   \bigg| \int_{\bbfR^3}W\cdot ( g \cdot \nabla ) h \ud x \bigg|  &\leqslant C\| W g \|_2\| \nabla h \|_2  \leqslant C\| W g \|_{L^{2,2}}\|\nabla h\|_2 \\
&\leqslant C\| W\|_{L^{3,\infty }}\| g \|_{L^{6,2}}\| \nabla  h\|_2 \\
&\leqslant C\| W\|_{L^{3,\infty }}\| \nabla  g \|_2 \| \nabla  h\|_2
\end{align*}
for all $f, g \in  \dot H^1_\sigma (\bbfR^3)$ and $W \in L^{3 ,\infty} (\bbfR^3)$
(this proof may be also found {\it e.g.}~in \cite[Proposition~3.4 in  the case $p=3$]{SD}).

Global-in-time mild solutions to the Navier-Stokes with small initial conditions in  $L^{3, \infty }_\sigma (\bbfR^3)$
have been constructed by Kozono and Yamazaki \cite{KY95} (see also Barraza \cite{B96}). These solutions are unique in the space
$C_w([0,\infty), L^{3,\infty}_\sigma (\bbfR^3))$ intersected with a set of functions with appropriate decay in time. The construction of \cite{KY95}   was improved by Meyer  \cite{Meyer}, who applied the Banach contraction principle to obtain non-decaying solutions in the space $C_w([0,\infty), L^{3,\infty}_\sigma (\bbfR^3))$. An analogous argument was used by Yamazaki  \cite{Yamazaki} to deal with the Navier-Stokes equations with time-dependent external forces in the whole space,  the half-space and,  exterior domains. Yamazaki formulated sufficient  conditions on the initial conditions and external forces to insure the existence of  unique small solutions bounded for all time in  weak $L^3$-spaces. Stability properties of small mild global-in-time solutions to the Navier-Stokes problem with external forces has been also studied in \cite{CK05}.

\subsection{Morrey space}

Let $1 < p \leqslant q < \infty$ and $p'=p/(p-1)$ and $q'=q/(q-1)$.
The homogeneous Morrey spaces are defined as
\begin{equation*}
    \dot{M}^q_p(\bbfR^3) = \{ f\in L^q_{loc}(\bbfR^3): \|f\|_{\dot{M}^q_p}= \sup_{R>0}\sup_{x\in \bbfR^3}\Big(\int_{B_R(x)}|f(y)|^p \ud y\Big)^{\frac{1}{p}}<\infty\}.
\end{equation*}
It is known (see \cite[Lemma 21.1]{Lemarie} and \cite[Lemma 3.13]{SD}) that this space is the dual space of  $ \dot{N}^{q'}_{p'}(\bbfR^3 )$ defined in the following way. For $1 < q' \leqslant p' < \infty $, we set
\begin{equation*}
     \dot{N}^{q'}_{p'}(\bbfR^3 ) = \left \{ \begin{aligned}
 f \in L^{q'}(\bbfR^3 ) : \ f &\stackrel{L^{q'}}= \sum_{k\in \bbfN}g_k, \quad \textrm{where} \quad \{g_k\} \subset L^{p'}_{comp}(\bbfR^3) \quad \textrm{and} \\
\sum_{k\in \bbfN} d^{n\big(\frac{1}{q'} - \frac{1}{p'}\big)}_k \| g_k \|_{L^{p'}} &<\infty , \quad  \textrm{where}\quad  \forall k \quad  d_k = \textrm{diam}(\textrm{supp}\  g_k ) <\infty
 \end{aligned}
 \right \}
\end{equation*}
which is a Banach space  equipped with the norm 
\begin{equation*}
  \| f \|_{\dot{N}^{q'}_{p'}} = \inf\Big\{ \sum_{k\in \bbfN} d_k^{n\big(\frac{1}{q'}\frac{1}{p'}\big)}\|g_k \|_{p'}\Big\}.
\end{equation*}
We also recall an estimate for the product  functions in $\dot{N}^{q'}_{p'}(\bbfR^3)$.  This estimate will be essential for   the proof of our  of inequality \rf{hardy1} in  $\X = \dot{M}^3_p(\bbfR^3)$. 

\begin{lemma}[{\cite[Prop. 21.1]{Lemarie}} and {\cite[Lemma 3.14]{SD}}]
\label{hol_mor}
Let $1 \leqslant q' \leqslant p' \leqslant 2$. There exists 
a constant 
$C>0$ such that for all $f \in L^2(\bbfR^3)$ and $g\in \dot{H}^{3/q}(\bbfR^3)$,
\begin{equation}\label{hol_mor_eq}
    \| fg\|_{\dot{N}^{q'}_{p'}}\leqslant C \| f\|_2 \| g\|_{\dot{H}^{3/q}}.
\end{equation}    
\end{lemma}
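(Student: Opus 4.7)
The natural approach is by duality combined with the Morrey–Sobolev multiplier inequality. Since $\dot{M}^q_p(\mathbb{R}^3)$ is the dual of $\dot{N}^{q'}_{p'}(\mathbb{R}^3)$ under the indicated pairing, we have
\[
\|fg\|_{\dot{N}^{q'}_{p'}} \;=\; \sup\Bigl\{\,\Bigl|\int_{\mathbb{R}^3} fgh\,\mathrm{d}x\Bigr| \;:\; h\in \dot{M}^q_p(\mathbb{R}^3),\ \|h\|_{\dot{M}^q_p}\leqslant 1\,\Bigr\}.
\]
The plan is to control the right-hand side by $C\|f\|_2\|g\|_{\dot{H}^{3/q}}$. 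First, by the Cauchy--Schwarz inequality in $L^2$, one immediately gets
\[
\Bigl|\int_{\mathbb{R}^3} fgh\,\mathrm{d}x\Bigr|\leqslant \|f\|_2\,\|gh\|_2,
\]
so the whole task reduces to the pointwise multiplier estimate
\begin{equation}\label{plan:mult}
\|gh\|_2\;\leqslant\; C\|g\|_{\dot{H}^{3/q}}\,\|h\|_{\dot{M}^q_p}\qquad\text{for all } g\in\dot{H}^{3/q},\ h\in\dot{M}^q_p .
\end{equation}
A scaling check confirms that \rf{plan:mult} has the correct homogeneity, which is reassuring since a naïve application of Hölder followed by Sobolev embedding would only give the weaker inequality with $\dot H^{3/p}$ in place of $\dot H^{3/q}$; the gain from $3/p$ down to $3/q$ is precisely what the Morrey structure of $h$ is designed to buy.

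To prove \rf{plan:mult}, I would use the Riesz-potential representation $\dot{H}^{3/q}=I_{3/q}(L^2)$: write $g=I_{3/q}\widetilde g$ with $\|\widetilde g\|_2=\|g\|_{\dot{H}^{3/q}}$. Testing against $\varphi\in L^2$ and moving the potential to the other side,
\[
\langle gh,\varphi\rangle \;=\; \langle I_{3/q}\widetilde g,\,h\varphi\rangle\;=\;\langle \widetilde g,\,I_{3/q}(h\varphi)\rangle,
\]
so by duality \rf{plan:mult} is equivalent to the trace-type bound
\begin{equation}\label{plan:trace}
\|I_{3/q}(h\varphi)\|_2\;\leqslant\; C\|h\|_{\dot{M}^q_p}\,\|\varphi\|_2 .
\end{equation}
Inequality \rf{plan:trace} is the Adams–Fefferman–Phong inequality for Morrey potentials. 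I would prove it by splitting the Riesz kernel into dyadic annular pieces $|y|\sim 2^k$, on each piece applying Hölder with exponent $p$ on balls of radius $2^k$ to extract the Morrey norm of $h$ via the scaling
\[
\Bigl(\int_{B_R(x)}|h|^p\,\mathrm{d}y\Bigr)^{1/p}\leqslant \|h\|_{\dot{M}^q_p}\,R^{3/p-3/q},
\]
bounding the remaining factor by the Hardy–Littlewood maximal function of $\varphi$, and then summing the resulting geometric series in $k$; the condition $p\leqslant q$ (equivalently $q'\leqslant p'$) makes the series converge, and the maximal function is $L^2$-bounded by Hardy--Littlewood, yielding the desired control by $\|\varphi\|_2$.

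The main obstacle is the trace inequality \rf{plan:trace}: this is where the Morrey structure is genuinely used and where the particular exponent $3/q$ (rather than $3/p$) appears. An alternative route, more in the spirit of the atomic definition of $\dot{N}^{q'}_{p'}$, would be to construct the atoms of $fg$ directly via a Whitney/dyadic cube decomposition $\{Q_k\}$ with diameters $d_k$, set $g_k=(fg)\chi_{Q_k}$, estimate $\|g_k\|_{p'}\leqslant \|f\chi_{\widetilde Q_k}\|_2\|g\chi_{Q_k}\|_r$ by Hölder with $\tfrac{1}{p'}=\tfrac{1}{2}+\tfrac{1}{r}$, and then sum $\sum_k d_k^{3(1/q'-1/p')}\|g_k\|_{p'}$ via Cauchy--Schwarz, using an almost-orthogonal control on $\sum_k \|f\chi_{\widetilde Q_k}\|_2^2\lesssim \|f\|_2^2$ and a Sobolev/Bernstein-type bound on $\sum_k d_k^{6(1/q'-1/p')}\|g\chi_{Q_k}\|_r^2$ in terms of $\|g\|_{\dot{H}^{3/q}}^2$. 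Either way, the technical crux is the same: matching the scale-weighted spatial pieces of $g$ against the Morrey cube-bound on $h$ so that the exponent $3/q$ on the $\dot H$-side emerges.
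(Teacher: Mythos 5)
The paper gives no proof of this lemma --- it is quoted from \cite[Prop.~21.1]{Lemarie} and \cite[Lemma 3.14]{SD} --- so your attempt must be judged on its own merits. Your reduction is the correct and standard one: by the duality $(\dot N^{q'}_{p'})^{*}=\dot M^{q}_{p}$ and Cauchy--Schwarz in $f$, the lemma is equivalent to the multiplier bound $\|gh\|_{2}\leqslant C\|g\|_{\dot H^{3/q}}\|h\|_{\dot M^{q}_{p}}$, hence to the trace inequality $\|I_{3/q}(h\varphi)\|_{2}\leqslant C\|h\|_{\dot M^{q}_{p}}\|\varphi\|_{2}$, and you correctly identify this as the crux. (Two side remarks: testing $fg$ against the unit ball of $\dot M^{q}_{p}$ a priori only places $fg$ in the \emph{bidual} of $\dot N^{q'}_{p'}$, so one should first argue for compactly supported $f$, where $fg$ is a single atom, and pass to the limit; and the argument genuinely needs $p'<2$, i.e.\ $p>2$ --- consistent with the paper's application $2<p\leqslant 3$, but not with the endpoint $p'=2$ admitted in the statement.)

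The genuine gap is in your proof of the trace inequality, which is the entire content of the lemma. On the annulus $|x-y|\sim 2^{k}$ your own estimates give
\[
2^{-k(3-3/q)}\int_{B_{2^{k}}(x)}|h|\,|\varphi|\ud y
\;\leqslant\;
2^{-k(3-3/q)}\cdot \|h\|_{\dot M^{q}_{p}}2^{k(3/p-3/q)}\cdot 2^{3k/p'}\Big(M\big(|\varphi|^{p'}\big)(x)\Big)^{1/p'}
\;=\;
\|h\|_{\dot M^{q}_{p}}\Big(M\big(|\varphi|^{p'}\big)(x)\Big)^{1/p'},
\]
since the total exponent of $2^{k}$ is $-(3-3/q)+(3/p-3/q)+3/p'=0$. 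The series over $k$ is therefore not geometric: every dyadic scale contributes the same amount and the sum diverges. This is forced by scale invariance --- the inequality is critical, so no scale-by-scale bound uniform in $k$ can be summed --- and the condition $p\leqslant q$ is irrelevant to summability (it only fixes the sign of the Morrey weight). What the maximal-function step actually requires is $p'<2$, so that $M(|\varphi|^{p'})^{1/p'}$ is bounded on $L^{2}$. Closing the argument at critical scaling needs a genuinely different mechanism, e.g.\ the Fefferman--Phong inequality $\int V|u|^{2}\ud x\leqslant C\|V\|_{\dot M^{q/2}_{p/2}}\|u\|_{\dot H^{3/q}}^{2}$ applied to $V=|h|^{2}\in\dot M^{q/2}_{p/2}$ (here $p/2>1$ is exactly $p>2$), whose proof is a stopping-time/good-$\lambda$ or capacitary argument rather than a dyadic summation. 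Your alternative atomic route has the same defect: a Whitney decomposition at a single scale cannot prove a scale-invariant estimate, and the ``Sobolev/Bernstein-type bound'' you defer is again the whole difficulty; a workable atomic proof must decompose $g$ over all dyadic frequencies and use atoms at the matching spatial scale, summing across scales with Cauchy--Schwarz.
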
 

The proof of inequality \rf{hardy1} follows easily by   inequality
\rf{hol_mor_eq} with $q=3$

\begin{align*}
    \bigg| \int_{\bbfR^3}W\cdot ( g \cdot \nabla ) h  \ud x \bigg|  & \leqslant \| W\|_{\dot{M}^3_p}\| g \nabla h\|_{\dot{N}^{\frac{3}{2}}_{p'}} \\
    &\leqslant  K \| W\|_{\dot{M}^3_p} \| \nabla g\|_2 \| \nabla h\|_2 ,
\end{align*}
for all $f, g \in  \dot H^1_\sigma (\bbfR^3)$ and $W \in \dot{M}^3_p (\bbfR^3)$, where $2<p \leqslant 3$ and $p$ and $p'$ are conjugate exponents.

 For   constructions of global-in-time mild solutions to the Navier-Stokes problem \rf{NS eq}--\rf{NS in d} in Morrey spaces,  
we refer the reader to  \cite{GM89, K92} as well as to the book  \cite[Ch.~18]{Lemarie} (and to references therein)
For small initial data, these solutions satisfy our standing assumptions \rf{V_space}--\rf{K} with $\X=\dot{M}^3_p (\bbfR^3)$, where $2<p \leqslant 3$.

\section{Existence of weak solutions}\label{existence}
\setcounter{equation}{0}

In this section, we construct weak solutions to the perturbed initial value problem \rf {w:eq}--\rf{w:ini}.  

\begin{proof}[Proof of Theorem \ref{th:ex}. First part -- Existence of solutions]
Step one  is a construction of
 weak solutions to problem \rf{w:eq}--\rf{w:ini} satisfying the strong energy inequality \rf{en:inq}. It follows a relatively standard Galerkin technique (see {\it e.g.} \cite[Ch. III. Thm. 3.1]{T}). 

Let $\{g_m\}^\infty _{m=1}$ be an orthonormal complete system in $L^2_\sigma (\bbfR^3)$ and  assume that $g_m \in H^1_\sigma (\bbfR^3)$ for every $m \in \bbfN$. Let $W_m$ be the  linear space spanned by 
$\{  g_1, g_2, ..., g_m \}$ for each $m=1, 2, ...$. Define an approximate solution $w_m : [0,T] \to W_m$ by 
\begin{equation*}
    w_m (t) = \sum_{i=1}^{m}d_{im}(t)g_i,
\end{equation*}
where the  coefficients $d_{im}=d_{im} (t)$ satisfy 
\begin{equation}\label{w m}
   \begin{split} 
    \frac{\ud }{\ud t} d_{jm}(t)&=\frac{\ud }{\ud t}\langle w_m(t),g_j\big\rangle  = \big\langle \nabla w_m (t), \nabla g_j\big\rangle + \mathfrak{b}\big( w_m(t) ,w_m(t),g_j\big) \\
&+ \mathfrak{b}\big( w_m(t),V(t) , g_j \big) + \mathfrak{b}\big( V(t) , w_m(t) ,g_j\big) =0 \qquad \text{for}\quad j=1,...,m , \\
   w_m (0)&= P_m w_0,
   \end{split}
\end{equation}
with the orthogonal projection operator $P_m: L^2_\sigma (\bbfR^3)\to W_m$ is given by  $P_m (v) = \sum_{i = 1}^m  \langle v, g_i \rangle g_i $. Recall that the term corresponding to the pressure in \rf{w:eq} vanishes in \rf{w m} since $\big\langle \nabla \pi, g_j \big\rangle = 0 $ as ${\rm div }\ g_j =0$. 

Due to  assumption \rf{hardy} (\textit{cf.}~Remark \ref{form_b}) both terms in \rf{w m} containing the solution $V(t)$ are convergent. Moreover, they are continuous in $t$ due to  the weak continuity of $V$ with respect to time assumed in \rf{V_space} and comments in Remark \ref{dual_space}. Thus, the system of ordinary differential equations \rf{w m} has a unique local-in-time solution $\{d_{im}(t)\}^m_{i=1}$. In view of the  {\it a priori} estimates of the sequence $\{w_m\}_{m=1}^\infty $ obtained  below in \rf{w_m_w_0_est}, the solution $d_{im}(t)$ is global-in-time. 

Multiply equation \rf{w m} by $d_{jm}$ and sum up the resulting equations for $j=1,2,..., m$. Using relation \rf{b_zero} we have $\mathfrak{b}\big( w_m , w_m , w_m\big) = \mathfrak{b}\big( V , w_m , w_m\big) =0$. Consequently,
\begin{equation*}
   \frac{1}{2}\frac{\ud}{\ud t}\|w_m(t)\|^2_2+\| \nabla w_m(t)\|_2^2 + \mathfrak{b}\big( w_m(t) , V(t), w_m (t) \big) = 0 .
\end{equation*}   
Applying inequality \rf{hardy} (or its equivalent version \rf{est_b_2}) and integrating from $s$ to $t$ yields 
\begin{equation}\label{w_m_w_0_est}
    \| w_m(t)\|^2_2 +2 \big( 1-K\sup_{t>0}\| V(t)\|_{\X} \big) \int_s^t\| \nabla w_m(\tau )\|_2^2 \ud \tau \leqslant \| w_m (s)\|_2^2 \leqslant \| w_0 \|_2^2.
\end{equation}
Recall that, by hypothesis \rf{K}, we have $K\sup_{t>0}\| V(t)\|_{\X}<1$. Thus, we can extract a subsequence, also denoted by $\{w_m\}_{m=1}^\infty $, converging towards a vector field $w \in L^2\big([0,T], \dot H_\sigma^1(\bbfR^3) \big) \cap C_w \big([0,T], L^2_\sigma (\bbfR^3) \big)$ in the following sense
\begin{align}
   \label{conv_1} & w_m \rightarrow w \quad \text{in }\ L^2\big([0,T], \dot H_\sigma^1(\bbfR^3) \big) \quad \text{weakly}\\
   \label{conv_2} & w_m \rightarrow w \quad \text{in }\ L^\infty \big([0,T], L^2_\sigma (\bbfR^3) \big)\quad \text{weak}-\star .
\end{align}

By standard arguments, involving  fractional 
 derivatives in time, see {\it e.g.} \cite[Ch. III. Thm. 3.1]{T}, it follows  that there exists a subsequence denoted again by $\{w_m\}$ such that
\begin{equation}\label{conv_3}
   w_m \rightarrow w \qquad \textrm{in} \quad L^2 \big( [0,T], L^2_{loc} (\bbfR^3 ) \big) .
\end{equation}

The next step is to show  that the limiting vector field $w = w(x,t)$ satisfies equation \rf{weak}.  By   \rf{w m}, we have for all  $\vf \in W_m$ \begin{equation}\label{w_m_plus_phi}
\begin{split} 
  \Big\langle \frac{\ud }{\ud t} w_m(t), \vf \Big\rangle  &+ \big\langle \nabla w_m (t), \nabla \vf \big\rangle + \mathfrak{b}\big( w_m(t) ,w_m(t), \vf \big) \\
&+ \mathfrak{b}\big( w_m(t),V(t) , \vf \big) + \mathfrak{b}\big( V(t) , w_m(t) , \vf  \big) =0 ,\\
   w_m (0)&= P_m w_0.
\end{split}
\end{equation}
Our goal now is to obtain  \rf{w_m_plus_phi} with a time dependent function $\vf \in C^1 \big( [0, \infty ), W_m \big)$.  The Leibniz formula yields 
\begin{equation}\label{eq_1}
  \Big\langle \frac{\ud}{\ud t} w_m (t) , \vf (t) \Big\rangle  =  \frac{\ud }{\ud t}\big\langle w_m(t), \vf (t) \big\rangle - \Big \langle w_m(t), \frac{\ud }{\ud t} \vf (t) \Big\rangle .
\end{equation}
Combining  \rf{eq_1} and \rf{w_m_plus_phi} we obtain
\begin{equation}\label{new_eq_w_m}
   \begin{split} 
    \frac{\ud }{\ud t}\big\langle w_m (t) , \vf  (t) \big\rangle & - \Big \langle w_m(t), \frac{\ud }{\ud t} \vf (t) \Big\rangle + \big\langle \nabla w_m (t), \nabla \vf (t)\big\rangle + \mathfrak{b}\big( w_m(t) ,w_m(t), \vf (t)\big) \\
&+ \mathfrak{b}\big( w_m(t),V(t) , \vf (t)\big) + \mathfrak{b}\big( V(t) , w_m(t) , \vf (t) \big) = 0 , \\
   w_m (0)&= P_m w_0.
\end{split}
\end{equation}
Integration of \rf{new_eq_w_m} over $[s,t]$ gives for all $\vf \in C^1\big( [0, \infty ), W_m\big)$
\begin{equation}\label{new_eq_w_m_1}
   \begin{split} 
    \big\langle w_m (t) , \vf  (t) \big\rangle & - \big\langle w_m (s) , \vf  (s) \big\rangle - \int_s^t \Big \langle w_m(\tau), \frac{\ud }{\ud \tau} \vf (\tau ) \Big\rangle \ud \tau \\
 & + \int_s^t \big\langle \nabla w_m (\tau ), \nabla \vf (\tau )\big\rangle \ud \tau + \int_s^t \mathfrak{b}\big( w_m(\tau) ,w_m(\tau ), \vf (\tau )\big) \ud \tau \\
&+ \int_s^t \mathfrak{b}\big( w_m(\tau ),V(\tau ) , \vf (\tau )\big) \ud \tau + \int_s^t \mathfrak{b}\big( V(\tau ) , w_m(\tau ) , \vf (\tau ) \big) \ud \tau = 0 , \\
   w_m (0)&= P_m w_0.
   \end{split}
\end{equation}
 It suffices to pass to the limit in \rf{new_eq_w_m_1} using the convergence in \rf{conv_1}--\rf{conv_3}. The first five terms in \rf{new_eq_w_m_1} are dealt as is standard when working with the classical Navier--Stokes system, (see \cite[Ch. ~III, the proof of Thm. 3.1]{T} and in particular \cite[Ch. ~III, the proof of Lemma 3.2]{T} to pass to the limit in the nonlinear term). The convergence of
\begin{align*}
  & \int_s^t \mathfrak{b}\big( w_m(\tau ),V(\tau ) , \vf (\tau ) \big) \ud \tau \rightarrow \int_s^t \mathfrak{b}\big( w(\tau ),V(\tau ) , \vf (\tau ) \big) \ud \tau  
  \intertext{and}
  & \int_s^t \mathfrak{b}\big( V(\tau ) , w_m(\tau ) , \vf (\tau ) \big) \ud \tau \rightarrow \int_s^t \mathfrak{b}\big( V(\tau ) , w(\tau ) , \vf (\tau ) \big) \ud \tau .
\end{align*}
follows by combining  Remark \ref{dual_space}, property \rf{conv_2}, and the Lebesgue dominated convergence theorem. 

The limit vector field $w = w(x,t)$ satisfies equation \rf{weak} for all $\vf \in C^1 \big( [0, \infty ), W_m \big)$ for each   $ m \geqslant 1$, and 
passing to the limit,  for all $\vf \in C \big( [0, \infty ), H^1_\sigma (\bbfR^3 ) \big) \cap C^1 \big( [0, \infty ), L^2_\sigma (\bbfR^3 ) \big)$. Hence, $w = w(x,t)$ is a weak solution of problem \rf{w:eq}--\rf{w:ini} in the energy space $X_T$ defined in \rf{XT} and, by a classical reasoning ({\it cf.} \cite[Ch. III]{T}), it satisfies strong energy inequality \rf{en:inq}.
\end{proof}

\medskip
\section{ Generalized energy inequalities} \label{sec:GEI}

The idea behind the $L^2$-decay proof for the weak solution $w =w(x,t)$ constructed in  Theorem \ref{th:ex} is based on the  work reported in \cite{ORS}, where the decay was shown for solutions to the Navier--Stokes system with slowly decaying external forces. The extra terms in equation \rf{w:eq} containing the solution $V = V(x,t)$ cause  difficulties which do not appear in \cite{ORS}. To handle these terms, it is necessary to obtain a class of generalized energy inequalities (see \rf{gen_energy} below). The proof of such inequalities requires  stronger convergence of the approximations $\{w_m\}$ than the one stated in \rf{conv_3}. The following improvement of \rf{conv_3} seems to be well-known, however,
for the completeness, we recall the proof.
 
\begin{lemma}\label{conv_4}
There exists a subsequence of the Galerkin approximations $\{ w_m \}$ 
considered in the proof of Theorem \ref{th:ex}, which converges towards  $w=w(x,t)$ strongly in  $L^2 \big( [0,T], L^2 (\bbfR^3 ) \big)$, for every $T>0$.
\end{lemma}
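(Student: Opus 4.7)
The plan is to upgrade the local strong convergence $w_m \to w$ in $L^2([0,T], L^2_{loc}(\bbfR^3))$ from \eqref{conv_3} to global strong convergence by proving a tail estimate uniform in $m$. Since $P_m w_0 \to w_0$ strongly in $L^2_\sigma(\bbfR^3)$, for every $\ve > 0$ there is $R_0$ with $\int_{|x| > R_0} |P_m w_0|^2 \, dx < \ve$ for all $m \in \bbfN$, so the whole issue is to propagate this initial tail smallness forward in time through the Galerkin dynamics.

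To do so I would fix a smooth cutoff $\phi_R(x) = \phi(x/R)$ with $\phi \equiv 0$ on $|x| \leq 1$ and $\phi \equiv 1$ on $|x| \geq 2$, so that $|\nabla \phi_R| \leq C/R$, and test the Galerkin system \eqref{w m} with $\vf = P_m(\phi_R^2 w_m) \in W_m$. Since $\partial_t w_m \in W_m$ and $P_m$ is an orthogonal projection, the time-derivative term produces $\tfrac12 \frac{d}{dt} \int \phi_R^2 |w_m|^2 \, dx$, while the viscous term yields $\int \phi_R^2 |\nabla w_m|^2 \, dx$ plus a commutator of order $R^{-1}\|w_m\|_2 \|\nabla w_m\|_2$. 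Integrating by parts using $\operatorname{div} w_m = \operatorname{div} V = 0$ gives
\begin{equation*}
\mathfrak{b}(w_m, w_m, \phi_R^2 w_m) = -\tfrac12 \int (w_m \cdot \nabla \phi_R^2)|w_m|^2 \, dx, \quad \mathfrak{b}(V, w_m, \phi_R^2 w_m) = -\tfrac12 \int (V \cdot \nabla \phi_R^2)|w_m|^2 \, dx,
\end{equation*}
and both are controlled in $L^1_t$ by $C/R$ using Gagliardo--Nirenberg $\|w_m\|_3^3 \lesssim \|w_m\|_2^{3/2}\|\nabla w_m\|_2^{3/2}$ together with the embedding $\X \hookrightarrow L^3 + L^{3,\infty}$.

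The main obstacle is the cross term $\mathfrak{b}(w_m, V, \phi_R^2 w_m)$, because $V$ carries no spatial decay and the Hardy-type inequality \eqref{hardy} is scale-invariant. Setting $W_m := \phi_R w_m$ and rewriting $\phi_R^2 (w_m \cdot \nabla) w_m = (W_m \cdot \nabla) W_m - \phi_R (w_m \cdot \nabla \phi_R) w_m$ reduces the dangerous piece to $-\int V \cdot (W_m \cdot \nabla) W_m \, dx$ modulo $O(R^{-1})$ errors. The Hardy inequality applied to fields that need not be divergence free (cf.\ Remark~\ref{rem:multip}) bounds this by $K \sup_{t>0} \|V(t)\|_{\X} \|\nabla W_m\|_2^2$; the elementary estimate $\|\nabla W_m\|_2^2 \leq (1+\delta)\|\phi_R \nabla w_m\|_2^2 + C_\delta R^{-2}\|w_m\|_2^2$ together with a choice of $\delta>0$ so small that $(1+\delta) K \sup \|V\|_{\X} < 1$ (possible by \eqref{K}) then absorbs the principal term into the dissipation $\int \phi_R^2 |\nabla w_m|^2 dx$.

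Gronwalling the resulting differential inequality yields
\begin{equation*}
\sup_{t \in [0,T]} \int \phi_R^2 |w_m(t)|^2 \, dx \leq \int \phi_R^2 |P_m w_0|^2 \, dx + \frac{C(T)}{R} + \epsilon_m,
\end{equation*}
where $\epsilon_m \to 0$ absorbs the Galerkin commutator $(I-P_m)(\phi_R^2 w_m)$, which vanishes as $m \to \infty$ by completeness of $\{g_k\}$ in $L^2_\sigma$. I would then split
\begin{equation*}
\|w_m - w\|_{L^2(0,T;L^2)}^2 \leq 2\|w_m - w\|_{L^2(0,T;L^2(|x| \leq 2R))}^2 + 2\int_0^T \!\!\int_{|x| > R} \!(|w_m|^2 + |w|^2)\, dx\, dt,
\end{equation*}
observe that the first summand tends to zero for each fixed $R$ by \eqref{conv_3}, the tail of $w$ vanishes with $R$ by dominated convergence since $w \in L^\infty(0,T;L^2_\sigma)$, and the tail of $w_m$ is controlled uniformly by the Gronwall bound. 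Choosing first $R$ large and then $m$ large completes the proof and extracts the desired subsequence.
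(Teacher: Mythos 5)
Your proposal follows essentially the same route as the paper's proof: test the Galerkin system with the cut-off field $\vf_R^2 w_m$ (the paper inserts it directly, without the projection $P_m$), absorb the dangerous term $\mathfrak{b}(w_m,V,\vf_R^2 w_m)=\mathfrak{b}(\vf_R w_m,V,\vf_R w_m)$ into the dissipation via the Hardy inequality \rf{hardy} and the smallness \rf{K}, deduce a tail estimate uniform in $m$, and combine it with the local strong convergence \rf{conv_3}. The outline is correct; the two points to firm up are (a) your claim that the commutator $(I-P_m)(\vf_R^2 w_m)$ is negligible, since this field is neither solenoidal nor independent of $m$ and the viscous term requires $H^1$- rather than $L^2$-control of the error (this is where the special basis of Lemma \ref{est_proj} would have to enter, whereas the paper simply bypasses the projection), and (b) your $O(1/R)$ bound for the cut-off remainders of $\mathfrak{b}(w_m,w_m,\vf_R^2 w_m)$ and $\mathfrak{b}(V,w_m,\vf_R^2 w_m)$, which should be run through the multiplier estimate of Remark \ref{rem:multip} rather than an embedding $\X\hookrightarrow L^3+L^{3,\infty}$ (false for the Morrey spaces $\dot M^3_p$, $p<3$).
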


\begin{corollary}\label{L_p_conv}
The sequence of the Galerkin approximations $\{w_m\}$ from Lemma \ref{conv_4} converges strongly in $L^2 \big( [0, T], L^p_\sigma (\bbfR^3) \big)$ for every $p\in [2, 6)$ and $T > 0$.
\end{corollary}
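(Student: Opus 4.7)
The plan is to deduce the stronger convergence in $L^2([0,T],L^p_\sigma(\R^3))$ for $p\in[2,6)$ by combining the $L^2$-strong convergence supplied by Lemma \ref{conv_4} with the uniform bound of the gradients in $L^2([0,T],L^2(\R^3))$ coming from \rf{w_m_w_0_est}, via the Gagliardo--Nirenberg--Sobolev interpolation. Specifically, I would first fix $p\in[2,6)$ and set $\theta=(6-p)/(2p)\in(0,1]$ so that
\begin{equation*}
\frac{1}{p}=\frac{\theta}{2}+\frac{1-\theta}{6}.
\end{equation*}
The classical interpolation inequality combined with the Sobolev embedding $\dot H^1(\R^3)\hookrightarrow L^6(\R^3)$ then gives
\begin{equation*}
\|w_m(t)-w(t)\|_p \leqslant \|w_m(t)-w(t)\|_2^{\theta}\,\|w_m(t)-w(t)\|_6^{1-\theta}\leqslant C\|w_m(t)-w(t)\|_2^{\theta}\,\|\nabla(w_m(t)-w(t))\|_2^{1-\theta}
\end{equation*}
for almost every $t\in[0,T]$.

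Next I would square this pointwise bound and integrate over $[0,T]$, applying H\"older's inequality with conjugate exponents $1/\theta$ and $1/(1-\theta)$ (which is legal because $\theta\in(0,1]$):
\begin{equation*}
\int_0^T\|w_m(t)-w(t)\|_p^{2}\,\ud t\leqslant C\left(\int_0^T\|w_m(t)-w(t)\|_2^{2}\,\ud t\right)^{\theta}\left(\int_0^T\|\nabla(w_m(t)-w(t))\|_2^{2}\,\ud t\right)^{1-\theta}.
\end{equation*}
The second factor stays bounded uniformly in $m$: indeed, \rf{w_m_w_0_est} together with \rf{K} bounds $\|\nabla w_m\|_{L^2([0,T],L^2)}$, while $\|\nabla w\|_{L^2([0,T],L^2)}$ is finite since $w\in L^2([0,T],\dot H^1_\sigma(\R^3))$ by \rf{conv_1}. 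The first factor tends to $0$ as $m\to\infty$ by Lemma \ref{conv_4}. Since $\theta>0$, this forces the whole right-hand side to vanish in the limit, proving the claim for the chosen $p$.

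There is essentially no obstacle here beyond bookkeeping: the key input, strong $L^2_{t,x}$-convergence, has already been secured in Lemma \ref{conv_4}, and the uniform $\dot H^1$-bound has already been secured in \rf{w_m_w_0_est}. The only mild point to watch is the boundary case $p=2$ (where $\theta=1$ and the statement is already Lemma \ref{conv_4}) and the borderline $p\to 6$ (where $\theta\to 0$, and the above computation breaks down, reflecting the fact that strong convergence in $L^2([0,T],L^6)$ is not available from these ingredients alone --- hence the strict inequality $p<6$ in the statement).
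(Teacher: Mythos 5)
Your argument is correct and is essentially the paper's own proof, which simply cites the H\"older (interpolation) inequality, the Sobolev inequality $\|w\|_6\leqslant\|\nabla w\|_2$, Lemma \ref{conv_4}, and estimate \rf{w_m_w_0_est}; you have merely made the interpolation exponent $\theta=(6-p)/(2p)$ and the H\"older-in-time step explicit. No gaps.
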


\begin{proof}[Proof of Corollary \ref{L_p_conv}.]
This is an immediate consequence of the H\"older inequality, the Sobolev inequality $\| w\|_6 \leqslant \| \nabla w\|_2$,
Lemma \ref{conv_4},  and estimate  \rf{w_m_w_0_est}. 
\end{proof}

\begin{proof}[Proof of Lemma \ref{conv_4}]
Let $\{w_m\}$ be a sequence of the Galerkin  approximations which converges towards a weak solution $w = w(x,t)$ of problem \rf{w:eq}--\rf{w:ini} in the local sense \rf{conv_3}. For every $R>0$, define the cut-off function
$\vf_R(x) = \vf_1(x/R)$, where 
\begin{equation*}
  \vf_1\in C^\infty (\bbfR^3 ) \quad \mbox{and}\quad  \vf_1(x) =
\begin{cases}1& \mbox{for}\quad  |x| \geqslant 1, \\  
 0& \mbox{for} \quad |x| \leqslant 1/2.\end{cases}
\end{equation*}
Substitute the test function $\vf  (t) = w_m (t)\vf_R^2$ into equation \rf{new_eq_w_m}. Since the function $\vf_R$ does not depend on $t$, it follows that
\begin{equation*}
    \frac{\ud }{\ud t} \langle w_m (t), w_m(t)\vf_R^2 \rangle - \big\langle w_m(t) , \frac{\ud }{\ud t}( w_m (t)\vf_R^2 )\big\rangle = \frac{1}{2} \frac{\ud }{\ud t } \| w_m(t) \vf_R \|_2^2 .
\end{equation*}
Thus, equation \rf{new_eq_w_m} yields
\begin{equation}\label{eq_2}
   \begin{split}
   \frac{1}{2} \frac{\ud }{\ud t } \| w_m(t) \vf_R \|_2^2 &+ \langle \nabla w_m (t), \nabla (w_m(t)\vf_R^2 ) \rangle + \bfo (w_m (t), w_m(t) , w_m(t)\vf_R^2 )\\
&+ \bfo (w_m(t) , V(t) , w_m(t)\vf_R^2 ) + \bfo (V(t), w_m(t) , w_m(t)\vf_R^2 ) =0.
   \end{split}
\end{equation}
By elementary calculations, we have 
\begin{align*}
   \| \nabla (w_m(t) \vf_R) \|_2^2 = \| (\nabla \vf_R) w_m(t) \|_2^2 + \| \vf_R \nabla w_m(t)\|_2^2  + \langle \nabla w_m (t), w_m(t) \nabla \vf_R^2 \rangle .
\end{align*}
The second term in \rf{eq_2} can be rewritten as 
\begin{align*}
  \langle \nabla w_m (t), \nabla (w_m(t)\vf_R^2 ) \rangle =  \| \vf_R \nabla w_m (t) \|_2^2 +  \langle \nabla w_m (t),  w_m(t)  \nabla \vf_R^2 \rangle.
\end{align*}
Combining the last two equalities with  \rf{eq_2} gives
\begin{equation}\label{1*}
    \begin{split}
   \frac{1}{2} \frac{\ud }{\ud t } \| w_m(t) \vf_R \|_2^2 &+ \| \nabla (w_m (t)\vf_R ) \|_2^2 - \| w_m (t)\nabla \vf _R\|_2^2  + \bfo (w_m (t), w_m(t) , w_m(t)\vf_R^2 )\\
   \nonumber
&+ \bfo (w_m(t) , V(t) , w_m(t)\vf_R^2 ) + \bfo (V(t), w_m(t) , w_m(t)\vf_R^2 ) =0.
   \end{split}
\end{equation}
Note that $\bfo (w_m (t), w_m(t) , w_m(t)\vf_R^2 ) = 0$. Indeed, by the definition of  the form $\bfo = \bfo (\cdot , \cdot, \cdot )$, the divergence free condition of $w_m (t)$, relation \rf{b_zero}, and since  $\vf_R$ is a scalar function, we get 
\begin{equation}\label{2*}
     \begin{split}
  \bfo ( w_m , w_m  , w_m \vf_R^2 ) &=\sum_{i,j=1}^3 \int_{\bbfR^3 } w_m^i (w_m^j)_{x_i} w_m^j \vf_R^2\ud x = \sum_{i,j=1}^3 \int_{\bbfR^3 } w_m^i \vf_R^2 (w_m^j)_{x_i} w_m^j \ud x\\
&= \bfo (w_m\vf_R^2 , w_m , w_m ) =0.
     \end{split}
\end{equation}
Similarly, we have
\begin{equation}\label{3*}
\bfo (V(t), w_m(t) , w_m(t)\vf_R^2 ) = \bfo (\vf_R^2 V, w_m , w_m ) =0.
\end{equation}
Using an analogous argument combined with inequality \rf{est_b_2} yields
\begin{equation}\label{4*}
   \begin{split}
   \big| \mathfrak{b}(w_m(t) , V(t) , w_m(t)\vf_R^2 ) \big| &= \big| \mathfrak{b}(w_m(t)\vf_R , V(t) , w_m(t)\vf_R ) \big| \\
 &\leqslant K\Big(\sup_{t>0} \| V(t)\|_{\X}\Big)\| \nabla (w_m \vf_R )\|_2^2.
    \end{split}
\end{equation}
Applying relations \rf{2*}--\rf{4*} in equation \rf{1*}, gives
\begin{equation}\label{eq_3}
   \begin{split}
   \frac{1}{2} \frac{\ud }{\ud t } \| w_m (t) \vf_R \|_2^2 + \big( 1 - K \sup_{t>0} \| V(t) \|_{\X}\big)\| \nabla (w_m(t) \vf_R ) \|_2^2 \leqslant \| w_m (t)\nabla \vf _R\|_2^2 .
   \end{split}
\end{equation}
Assumption \rf{K} insures that the second term on the left-hand side of   \rf{eq_3} is nonnegative. Thus, integration of  \rf{eq_3} from 0 to $t$ combined with  the definition of the function $\vf _R$ yields 
\begin{align*}
    \| w_m(t) \vf_R \|_2^2 &\leqslant \| w_m(0) \vf_R\|_2^2 + R^{-2} \| \nabla \vf_1\|_{\infty } \int_0^t \| w_m (s) \|_2^2 \ud s \\
    & \leqslant \| w (0) \vf_R \|_2^2 + R^{-2} T \| \nabla \vf_1 \|_{\infty } \sup_{t\in [0, T] } \| w_m (t)\|_2^2 .
\end{align*}
By \rf{w_m_w_0_est}, we have $\| w_m (t) \|_2^2 \leqslant \| w_m (0)\|_2^2 \leqslant \| w_0 \|_2^2$ and consequently, we obtain the inequality
\begin{align}\label{inq_CTR}
    \| w_m(t) \vf_R \|_2^2 \leqslant \| w(0) \vf_R\|_2^2 + CT R^{-2}\| w_0 \|_2^2 .
\end{align}
 Using the Cantor diagonal argument, we find a subsequence of  the Galerkin approximations $\{ w_m \}$ constructed in Theorem \ref{th:ex} which converges towards a weak solution $w$ in $L^2\big( [0,T], L^2(B_R) \big)$ for every ball $B_R$ of radius $R>0$. Since, the tail estimates \rf{inq_CTR} are independent of $m$, this convergence holds true in the norm of the space $L^{2}\big([0,T], L^2(\bbfR^3)\big)$. This completes the proof of the lemma.
\end{proof}

We now  prove a class of  needed generalized energy inequalities for 
 weak solutions to the perturbed problem \rf{w:eq}--\rf{w:ini}. In the sequel, 
 the following notation is introduced. For a   vector field $w= w(x,t)$   and a scalar function $\psi=\psi(x,t) $,
both depending on $x$ and $t$,  set 
\[\psi \ast w = (\psi \ast w_1, \psi \ast w_2, \psi \ast w_3 ) \quad
\mbox{and}\quad \psi ' = \partial_t \psi, \]
where the convolution is calculated  with respect to the $x$-variable, only.


\begin{theorem}[Generalized energy inequality]\label{th:ex1}
Let $E \in C^1[0, \infty )$ with $E(t) \geqslant 0$, and $\psi (t) \in C^1\big([0, \infty ); \mathcal{S}(\bbfR^3)\big)$ be arbitrary functions. Then there exists a weak solution $w \in X_T= C_w \big( [0,T], L^2_\sigma (\bbfR^3)\big)\cap L^2\big([0,T], \dot H_\sigma^1(\bbfR^3)\big)$ of the perturbed problem \rf{w:eq}--\rf{w:ini} satisfying the following generalized energy inequality
\begin{equation}\label{gen_energy}
   \begin{split}
      E(t) \|\psi(t) &\ast w(t)\|_2^2 \leqslant  E(s) \|\psi(s) \ast w(s)\|_2^2 +  \int_s^t  E'(\tau)  \|\psi(\tau) \ast w(\tau)\|_2^2  \ud \tau\\
&+2 \int_s^t E(\tau)  \Big[\langle  \psi '(\tau) \ast  w(\tau), \psi(\tau) \ast  w(\tau)\rangle - \| \psi (\tau ) \ast \nabla w( \tau )\|_2^2  \Big] \ud \tau  \\
&+2 \int_s^t E(\tau) \Big[ \mathfrak{b}(w, w, \psi \ast \psi \ast w)(\tau)   +  \mathfrak{b}(V, w, \psi \ast \psi \ast w) (\tau) \Big] \ud \tau \\
&+ 2 \int_s^t E(\tau) \mathfrak{b}(w, V, \psi \ast \psi \ast w)(\tau) \ud \tau 
    \end{split}
\end{equation} 
for almost all $s\geqslant  0$ including $s = 0$ and all $t \geqslant s \geqslant 0$.
\end{theorem}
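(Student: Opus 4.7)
The plan is to revisit the Galerkin construction from Theorem~\ref{th:ex} and derive an approximate equality at the level of the sequence $\{w_m\}$ that mirrors \rf{gen_energy}, then pass to the limit using the improved strong convergence supplied by Lemma~\ref{conv_4} and Corollary~\ref{L_p_conv}. The inequality sign in \rf{gen_energy}, rather than equality, will arise solely from weak lower semicontinuity of the dissipation term $\|\psi\ast\nabla w\|_2^2$.

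For the derivation at the approximate level, I would assume (as is standard and harmless) that the orthonormal basis $\{g_j\}$ of $L^2_\sigma(\bbfR^3)$ consists of Stokes eigenfunctions, so that $P_m$ commutes with $-\Delta$ on its range, and also that $\psi(\tau)$ is a real-valued even function of $x$ (a harmless convention for the intended applications, such as the Fourier splitting method). Into the integrated formulation \rf{new_eq_w_m_1} I would insert the test function $\varphi_m(\tau)=E(\tau)\,P_m\big(\psi(\tau)\ast\psi(\tau)\ast w_m(\tau)\big)$, which lies in $C^1([0,T],W_m)$. Since $w_m,\partial_\tau w_m\in W_m$ and $P_m$ is self-adjoint, one has $\langle w_m,\Phi_m\rangle=\langle w_m,\psi\ast\psi\ast w_m\rangle=\|\psi\ast w_m\|_2^2$ and, via Plancherel together with the eigenbasis assumption, $\langle\nabla w_m,\nabla\Phi_m\rangle=\|\psi\ast\nabla w_m\|_2^2$, where $\Phi_m=P_m(\psi\ast\psi\ast w_m)$. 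A short product-rule calculation combined with an integration by parts against $E$ in time then yields an \emph{equality} of the exact shape of \rf{gen_energy}, with $w$ and $\psi\ast\psi\ast w$ replaced by $w_m$ and $\Phi_m$.

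The passage to the limit $m\to\infty$ decomposes as follows. The linear terms involving $\psi\ast w_m$, $\psi'\ast w_m$, and the boundary values at $s,t$ converge thanks to the weak-$\star$ convergence \rf{conv_2} in $L^\infty([0,T],L^2_\sigma)$ combined with the smoothing action of the Schwartz kernels $\psi,\psi'$. The three trilinear terms converge using the strong $L^2\big([0,T],L^p_\sigma\big)$ convergence of Corollary~\ref{L_p_conv} for a suitable $p\in[2,6)$, with H\"older estimates controlling each product; one also uses that $\Phi_m\to\psi\ast\psi\ast w$ strongly in $L^2([0,T],H^1_\sigma)$ since $P_m\to I$ strongly on $L^2_\sigma$ and $\psi$ regularizes, and for the trilinear term containing $V$ the standing hypothesis \rf{hardy} is invoked in addition. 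For the dissipation term $\int_s^t E\|\psi\ast\nabla w_m\|_2^2\,\ud\tau$, only weak lower semicontinuity is available, and $E\geqslant 0$ together with the weak convergence $\nabla w_m\rightharpoonup\nabla w$ in $L^2([0,T],L^2)$ from \rf{conv_1} yields $\liminf_{m\to\infty}\int_s^t E\|\psi\ast\nabla w_m\|_2^2\,\ud\tau\geqslant\int_s^t E\|\psi\ast\nabla w\|_2^2\,\ud\tau$; this is precisely the step that degrades the Galerkin-level equality into the inequality \rf{gen_energy}. The case $s=0$ is covered by the strong convergence $P_m w_0\to w_0$ in $L^2_\sigma$, and the case of a.e.\ $s>0$ by the fact that $\|\psi\ast w_m(\cdot)\|_2^2\to\|\psi\ast w(\cdot)\|_2^2$ in $L^1_{\mathrm{loc}}[0,\infty)$ and hence pointwise a.e.\ along a subsequence.

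The main technical obstacle is the interplay between the projection $P_m$ and convolution with $\psi$: without the Stokes eigenbasis one would lose $P_m(-\Delta)=(-\Delta)P_m$ on $W_m$, and the identification $\langle\nabla w_m,\nabla\Phi_m\rangle=\|\psi\ast\nabla w_m\|_2^2$ would be contaminated by a commutator term that would then have to be shown negligible in the limit---a task made clumsier by the fact that the resulting error involves gradients rather than $L^2$ norms.
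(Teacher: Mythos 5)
Your overall architecture --- insert the test function $E(\tau)P_m(\psi\ast\psi\ast w_m)$ into the Galerkin identity \rf{new_eq_w_m_1}, obtain an (in)equality at the approximate level, and pass to the limit with weak lower semicontinuity supplying the inequality in the dissipation term --- is exactly the paper's strategy. But the device you rely on to make the algebra work is not available: there is no orthonormal basis of $L^2_\sigma(\bbfR^3)$ consisting of Stokes eigenfunctions. On the whole space the Stokes operator is unitarily equivalent, via the Fourier transform and the Leray projection, to multiplication by $|\xi|^2$; its spectrum is purely continuous and it has no $L^2$ eigenfunctions. So the hypothesis that $P_m$ commutes with $-\Delta$ on its range cannot be arranged, and your identification $\langle\nabla w_m,\nabla\Phi_m\rangle=\|\psi\ast\nabla w_m\|_2^2$ collapses to exactly the ``commutator term'' you defer in your last paragraph. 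That deferred term is where the real work of the proof lies, and your proposal does not address it.

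The paper resolves this differently. First, it never identifies $\langle\nabla w_m,\nabla\vf_m\rangle$ with $\|\psi\ast\nabla w_m\|_2^2$ at the approximate level; it keeps the term as written in \rf{int_s_t} and performs the identification only after the limit, where no projection remains. Second, the limit passage needs (a) $\nabla\vf_m\to\nabla(w\ast\psi\ast\psi)$ in $L^2([0,T],L^2)$ and (b) the uniform bound $\|\nabla\vf_m(t)\|_p\leqslant C\|w_0\|_2$ (estimate \rf{est_conv_7}, used with $p=4$) to control the trilinear remainders such as $\mathfrak{b}(w_m-w,\vf_m,w_m)$ via H\"older and Corollary \ref{L_p_conv}. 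Neither (a) nor (b) follows from ``$P_m\to I$ strongly on $L^2_\sigma$ and $\psi$ regularizes,'' as you assert: the convolution with $\psi$ sits \emph{inside} the projection, and for a generic orthonormal basis of $L^2_\sigma$ the projections $P_m$ are unbounded on $H^1$ and on $W^{1,p}$, so $\nabla P_m v$ need not converge to $\nabla v$ nor be uniformly controlled. The paper's Lemma \ref{est_proj} supplies precisely the missing structure by choosing the Battle--Federbush divergence-free wavelet basis, a Riesz basis of $W^{1,p}_\sigma$ with uniformly bounded projections \rf{PmSob} and the gradient-commutation property \rf{sum_nabla}; these feed into Corollary \ref{C_conv_6} and Lemma \ref{lem_conv_7}. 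Without some such choice of basis, your treatment of both the dissipation term and the nonlinear terms is unjustified. (A smaller point: convergence of the boundary terms $E(t)\langle w_m(t),\vf_m(t)\rangle$ requires the a.e.-in-$t$ strong $L^2$ convergence coming from Lemma \ref{conv_4}, not merely the weak-$\star$ convergence \rf{conv_2} you cite for the ``linear terms''; since you invoke Lemma \ref{conv_4} at the outset this is repairable, but the attribution is off.)
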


Before proving Theorem \ref{th:ex1}, we note that the proof of the $L^2$-decay of $w = w(x,t)$ requires the following two corollaries, which are consequences of the generalized energy inequality \rf{gen_energy}.

\begin{corollary}\label{c.low} 
Let $w $ be a weak solution to \rf{w:eq}--\rf{w:ini} satisfying the generalized energy inequality \rf{gen_energy}. Then for every $\vf \in \mathcal{S}(\bbfR^3 )$, we have
\begin{align*}
     \| \vf \ast w\|_2^2 \leqslant &\|\se^{(t-s)\Delta} \vf  \ast w(s) \|_2^2 \\
+ 2 \int_s^t  &\mathfrak{b}\big( w , w , \se^{2(t-\tau )\Delta}(\vf \ast \vf ) w\big) (\tau ) + \mathfrak{b}\big( V , w , \se^{2(t-\tau )\Delta}(\vf \ast \vf ) w\big) (\tau ) \\
&+ \mathfrak{b}\big( w , V , \se^{2(t-\tau )\Delta}(\vf \ast \vf ) w\big) (\tau )  \ud \tau 
\end{align*}
for almost all $s\geqslant 0$, including $s=0$ and all $t\geqslant s$.
\end{corollary}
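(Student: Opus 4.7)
The plan is to specialize the generalized energy inequality \rf{gen_energy} of Theorem \ref{th:ex1} to the test function $\psi(\tau) := \se^{(t-\tau)\Delta}\vf$, together with the weight $E(\tau)\equiv 1$, where $\vf\in\mathcal{S}(\bbfR^3)$ is the fixed Schwartz function appearing in the corollary and $t\geqslant s$ is the fixed final time. Under this choice one has $\psi(t)=\vf$ and $\psi(s)=\se^{(t-s)\Delta}\vf$, which immediately yields the left-hand side and the first boundary term on the right-hand side of the claimed inequality. The trivial weight makes the $E'(\tau)$-term disappear.

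The central observation is that the entire ``dissipation block'' in \rf{gen_energy} cancels. Since $\psi'(\tau)=-\Delta\psi(\tau)$ and convolution commutes with the Laplacian, an integration by parts (justified by the smoothing of the heat semigroup, which places $\psi(\tau)\ast w(\tau)$ in any Sobolev space) gives
\begin{equation*}
\langle \psi'(\tau)\ast w(\tau),\psi(\tau)\ast w(\tau)\rangle = -\langle\Delta(\psi(\tau)\ast w(\tau)),\psi(\tau)\ast w(\tau)\rangle = \|\psi(\tau)\ast\nabla w(\tau)\|_2^2,
\end{equation*}
which precisely cancels the $-\|\psi(\tau)\ast\nabla w(\tau)\|_2^2$ term. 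Thus only the boundary contribution and the three trilinear integrals survive. For the latter, I would identify the third argument by the elementary semigroup identity $\psi(\tau)\ast\psi(\tau)=\se^{2(t-\tau)\Delta}(\vf\ast\vf)$, which is immediate on the Fourier side, both sides transforming (up to the chosen normalization) to $\se^{-2(t-\tau)|\xi|^2}\widehat{\vf}(\xi)^2$.

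The only real technical nuisance is that Theorem \ref{th:ex1} requires $\psi\in C^1([0,\infty);\mathcal{S}(\bbfR^3))$, whereas $\se^{(t-\tau)\Delta}\vf$ is naturally defined only for $\tau\leqslant t$ (the backward heat flow is ill-behaved on $\mathcal{S}$). I would circumvent this by extending $\psi(\tau)$ smoothly to all $\tau\geqslant 0$ via a $C^1$ cut-off past $\tau=t$; this modification does not affect the inequality on $[s,t]$, since neither the endpoint values $\psi(s),\psi(t)$ nor the integrand on $[s,t]$ are altered. Beyond this routine extension the argument is a direct substitution into \rf{gen_energy}, and Corollary \ref{c.low} follows at once.
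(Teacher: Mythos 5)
Your proof is correct and follows essentially the same route as the paper: take $E\equiv 1$ and $\psi(\tau)=\se^{(t-\tau)\Delta}\vf$ in \rf{gen_energy}, observe that $\psi'=-\Delta\psi$ makes $\langle\psi'\ast w,\psi\ast w\rangle$ cancel the dissipation term $-\|\psi\ast\nabla w\|_2^2$, and use $\psi\ast\psi=\se^{2(t-\tau)\Delta}(\vf\ast\vf)$. The only cosmetic difference is how the admissibility of $\psi$ on all of $[0,\infty)$ is arranged: the paper shifts to $\psi(\tau)=\se^{(t+\eta-\tau)\Delta}\vf$ and lets $\eta\to 0$, whereas you extend $\psi$ past $\tau=t$ by a $C^1$ cut-off; both devices are equally valid.
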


\begin{proof}
Use the generalized energy inequality \rf{gen_energy} with 
\begin{equation*}
    E(t) \equiv 1 \ \ \mbox{and }\ \  \psi (\tau ) = \se^{(t+\eta-\tau)\Delta } \vf,\,\mbox{where}\  \eta>0.
\end{equation*} 
Then, $\psi(\tau) \ast w(\tau)  = \se^{(t+\eta-\tau )\Delta } \vf \ast w(\tau )$ and $\psi(\tau) \ast \psi (\tau)  = \se^{2(t+\eta-\tau )\Delta } \vf \ast \vf $.  A straightforward calculation involving properties of solutions to the heat equation shows 
\begin{equation*}
          \langle \psi'(\tau) \ast  w(\tau), \psi (\tau) \ast  w(\tau)\rangle - \| \nabla\psi(t) \ast w(t)\|_2^2 =0.
\end{equation*}
The corollary follows by letting $\eta \to 0$. 
\end{proof}

\begin{corollary}\label{c.high}
Let $E \in C^1[0, \infty )$ and $E(t) \geqslant 0$. Let $w = w(x,t)$ be a weak solutions  constructed in  Theorem \ref{th:ex} satisfying the generalized energy inequality \rf{gen_energy}. Then for every $\vf \in \mathcal{S} (\bbfR^3)$, the vector field $w = w(x,t)$ fulfills the inequality 
\begin{equation}\label{ineq_high}
   \begin{split}
   E(t) \| w(t) &- \vf \ast w(t) \|_2^2 \leqslant E(s) \| w(s) - \vf \ast w(s) \|^2_2 \\
   & + \int_s^t E'(\tau ) \| w(\tau ) - \vf \ast w(\tau ) \|_2^2 \ud \tau \\
   & -  2 \int_s^t E(t) \| \nabla w (\tau ) - \vf \ast \nabla w(\tau ) \|_2^2 \ud \tau \\
   & + 2 \int_s^t E(\tau)  \mathfrak{b}(w, w, \vf  \ast \vf  \ast w - 2 \vf \ast w )(\tau )   \ud \tau \\
   & + 2 \int_s^t E(\tau ) \mathfrak{b}(V, w, \vf  \ast \vf  \ast w - 2 \vf \ast w )(\tau )   \ud \tau \\
   & + 2 \int_s^t E(\tau) \mathfrak{b}(w, V, w - 2 \vf \ast w  + \vf  \ast \vf  \ast w)(\tau)   \ud \tau 
   \end{split}
\end{equation}
for almost all $s\geqslant 0$, including $s=0$ and all $t\geqslant s$.
\end{corollary}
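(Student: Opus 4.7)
My plan is to deduce \rf{ineq_high} from the generalized energy inequality \rf{gen_energy} by choosing $\psi$ to be a Schwartz-class approximation of the formal distribution $\delta-\vf$. For each $\eta>0$ let $G_\eta(x)=(4\pi\eta)^{-3/2}\se^{-|x|^2/(4\eta)}$ denote the three-dimensional heat kernel at time $\eta$, and set $\psi_\eta(\tau)\equiv G_\eta-\vf$, independent of $\tau$. Then $\psi_\eta\in C^1([0,\infty);\mathcal{S}(\bbfR^3))$, $\psi_\eta'=0$, and $\psi_\eta\ast\psi_\eta = G_{2\eta}-2 G_\eta\ast\vf+\vf\ast\vf$. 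Applying Theorem~\ref{th:ex1} to the pair $(E,\psi_\eta)$ yields the inequality \rf{gen_energy} with $\psi_\eta$ in place of $\psi$ and with the $\psi'$-contribution absent.

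Since $G_\eta\ast u\to u$ in $L^p(\bbfR^3)$ as $\eta\to 0^+$ for every $u\in L^p(\bbfR^3)$, $1\leqslant p<\infty$, we obtain, pointwise in $\tau$,
\begin{align*}
\psi_\eta\ast w(\tau)&\to w(\tau)-\vf\ast w(\tau)\quad\text{in}\ L^2,\\
\psi_\eta\ast\nabla w(\tau)&\to\nabla w(\tau)-\vf\ast\nabla w(\tau)\quad\text{in}\ L^2,\\
\psi_\eta\ast\psi_\eta\ast w(\tau)&\to w(\tau)-2\vf\ast w(\tau)+\vf\ast\vf\ast w(\tau)\quad\text{in}\ L^2\ \text{and in}\ \dot H^1.
\end{align*}
The two quadratic boundary terms of \rf{gen_energy} converge termwise, while the integrals $\int_s^t E'(\tau)\|\psi_\eta\ast w(\tau)\|_2^2\ud\tau$ and $-2\int_s^t E(\tau)\|\psi_\eta\ast\nabla w(\tau)\|_2^2\ud\tau$ pass to the limit by dominated convergence, using Young's inequality to produce the majorants $\|w(\tau)\|_2^2\leqslant\|w_0\|_2^2$ and $(1+\|\vf\|_1)^2\|\nabla w(\tau)\|_2^2\in L^1([0,T])$.

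For the three trilinear integrals, I write $\psi_\eta\ast\psi_\eta\ast w=(w-2\vf\ast w+\vf\ast\vf\ast w)+r_\eta$ with $r_\eta\to 0$ in $L^2([0,T];\dot H^1)$, and apply the standing bounds \rf{est_b_1}--\rf{est_b_2} together with the standard estimate $|\mathfrak{b}(w,w,h)|\leqslant C\|\nabla w\|_2^2\|\nabla h\|_2$ (via H\"older, Sobolev, and the antisymmetry of $\mathfrak{b}$) to conclude, by Cauchy--Schwarz in $\tau$ and dominated convergence, that the error produced by $r_\eta$ vanishes in the limit. Finally, the cancellations $\mathfrak{b}(w,w,w)=0$ and $\mathfrak{b}(V,w,w)=0$, both coming from the antisymmetry $\mathfrak{b}(f,g,h)=-\mathfrak{b}(f,h,g)$ (extended from~\rf{b_zero} to $V\in\X$ by the pointwise-multiplier structure recalled in Remark~\ref{rem:multip}), remove the $w$-contributions in the fourth and fifth trilinear integrals of~\rf{ineq_high}; the sixth integral retains the full combination $w-2\vf\ast w+\vf\ast\vf\ast w$ because $\mathfrak{b}(w,V,w)$ does not vanish in general.

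The main technical obstacle is the limit passage in the trilinear integral involving $V$, since $V$ belongs only to the scale-invariant space $\X$ and not to any standard Lebesgue space. However, the standing assumption \rf{est_b_2} is tailor-made for this: it bounds $|\mathfrak{b}(w,V,r_\eta)(\tau)|$ by $K\sup_{t>0}\|V(t)\|_\X\,\|\nabla w(\tau)\|_2\,\|\nabla r_\eta(\tau)\|_2$, reducing the entire convergence problem to the $L^2([0,T];\dot H^1)$-convergence of $\psi_\eta\ast\psi_\eta\ast w$, which follows from routine mollifier arguments.
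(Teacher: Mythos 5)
Your proposal is correct and follows essentially the same route as the paper: the paper also substitutes $\psi=\zeta_n-\vf$ into \rf{gen_energy} with $\zeta_n$ an approximation of the Dirac measure (compactly supported there, the heat kernel in your version -- an immaterial difference), kills the $\psi'$ term, expands $\psi\ast\psi\ast w$, invokes $\mathfrak{b}(w,w,w)=\mathfrak{b}(V,w,w)=0$, and passes to the limit. Your write-up merely supplies the convergence details that the paper leaves implicit.
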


\begin{proof}
Substitute in the generalized energy inequality \rf{gen_energy} the function $\psi (x,t) = \zeta _n (x) - \vf (x)$, where $\zeta _n (x) = n^{-3} \zeta (x/n)$ is a smooth and compactly supported approximation of the Dirac measure. The term in \rf{gen_energy} containing $\psi '$ is annihilated. Notice also that 
\begin{equation*}
    \psi \ast \psi \ast w = \zeta _n \ast \zeta _n \ast w - 2 \zeta _n \ast \vf \ast w + \vf \ast \vf \ast w .
\end{equation*}
Since $\mathfrak{b}(w, w, w) = \mathfrak{b}(V, w, w) =0$ for a divergence free vector field $w$ ({\it cf.} Remark \ref{b_def}), passing to the limit $n \rightarrow \infty $, yields  inequality \rf{ineq_high}. 
\end{proof}

\begin{proof}[Proof of Theorem \ref{th:ex1}]
 Let $\{w_m \}$ be a sequence of Galerkin approximations converging towards a weak solution $w$ of the perturbed problem \rf{w:eq}--\rf{w:ini} in the usual sense \rf{conv_1}--\rf{conv_2} as well as in the global $L^2$-sense established in Lemma \ref{conv_4}. In the sequel  we also assume that $w_m$ is a finite linear combination of elements of an orthonormal basis $\{g_m\}$ of $L^2_\sigma (\bbfR^3)$ satisfying the properties stated in the following lemma.

\begin{lemma}\label{est_proj}
There exists an orthonormal basis $\{g_m\}_{m=1}^\infty$ of $L^2_\sigma (\R^3)$ such that 
\begin{itemize}
\item $\{g_m\}_{m=1}^\infty$ is a Riesz basis of the Sobolev space $W^{1,p}_\sigma(\R^3)$ for each $1<p<\infty$;

\item there exists $C=C(p)>0$ such that for every $v\in W^{1,p}_\sigma(\R^3)$ and every $m\in \mathbb{N}$ we have 
\begin{equation}\label{PmSob}
     \|P_mv\|_{W^{1,p}_\sigma}\leq C\|v\|_{W^{1,p}_\sigma},
\end{equation}
where $P_m v=\sum_{k=1}^m \langle v,g_k\rangle g_k$ is the
 orthonormal $L^2$-projection,

\item for every $v\in H^{1}_\sigma(\R^3)$
\begin{equation}\label{sum_nabla}
       \sum_{k=1}^\infty \langle \nabla v,g_k\rangle g_k =\nabla v = \sum_{k=1}^\infty \langle  v,g_k\rangle \nabla g_k ,
\end{equation}
where the series converges strongly in $L^2_\sigma(\R^3)$.
\end{itemize}
\end{lemma}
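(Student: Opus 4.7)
The plan is to take $\{g_m\}_{m=1}^\infty$ to be a (reordering of a) compactly supported, smooth, divergence-free wavelet basis of $L^2_\sigma(\R^3)$ of the type constructed by Lemari\'e-Rieusset (see \cite[Ch.~2]{Lemarie} and references therein). Such a family $\{\Psi_{j,k,\varepsilon}\}_{j\in\bbfZ, k\in\bbfZ^3,\varepsilon\in E}$, with $E$ finite, is simultaneously an orthonormal basis of $L^2_\sigma(\R^3)$ and an \emph{unconditional} basis of $W^{s,p}_\sigma(\R^3)$ for all $1<p<\infty$ and all $|s|$ smaller than the regularity threshold of the mother wavelet (in particular $|s|\leq 1$, provided the wavelets are chosen sufficiently smooth). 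After enumeration of the countable index set, call them $g_1,g_2,\ldots$

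First, the fact that $\{g_m\}$ is a Riesz basis of $W^{1,p}_\sigma(\R^3)$ follows from the classical wavelet characterization of the Triebel-Lizorkin space $F^{1}_{p,2}(\R^3)=W^{1,p}(\R^3)$: for $v=\sum_m c_m g_m$ with $c_m=\langle v,g_m\rangle$, one has
\begin{equation*}
  \|v\|_{W^{1,p}_\sigma}\sim \Bigl\| \Bigl(\sum_{j,k,\varepsilon} 2^{2j}|c_{j,k,\varepsilon}|^2\chi_{Q_{j,k}}\Bigr)^{1/2}\Bigr\|_{L^p},
\end{equation*}
where $\chi_{Q_{j,k}}$ is the indicator of the dyadic cube of side $2^{-j}$ at position $2^{-j}k$. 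This equivalence depends only on the moduli $|c_m|$, which yields the unconditional basis property in $W^{1,p}_\sigma(\R^3)$.

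The bound (\ref{PmSob}) then comes almost for free: the operator $P_m v=\sum_{k\leq m}\langle v,g_k\rangle g_k$ acts on the wavelet coefficients by restricting to a finite subset, and the corresponding square function can only decrease. Since the $g_k$ are orthonormal in $L^2$, the $L^2$-orthogonal projection coincides with the wavelet-coefficient projection, so $\|P_mv\|_{W^{1,p}_\sigma}\leq C\|v\|_{W^{1,p}_\sigma}$ with $C$ independent of $m$ and $v$. Finally, (\ref{sum_nabla}) follows from (\ref{PmSob}) at $p=2$ by a density argument: finite linear combinations of the $g_k$ are dense in $H^1_\sigma(\R^3)$, and the uniform bound on $P_m$ in $H^1_\sigma$ upgrades pointwise convergence on the dense set to strong convergence $P_mv\to v$ in $H^1_\sigma(\R^3)$ for every $v\in H^1_\sigma(\R^3)$. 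Taking gradients of this identity gives the second equality in (\ref{sum_nabla}); the first equality is just the expansion of the divergence-free vector field $\nabla v$ (component-by-component, using that $\partial_iv$ is again divergence free) in the orthonormal $L^2_\sigma$-basis $\{g_k\}$.

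The only nontrivial ingredient is the existence of the divergence-free wavelet basis with the stated simultaneous orthonormality and unconditional-basis properties; this is the main obstacle, but it is available off the shelf from the wavelet literature and is the reason the lemma is stated as an existence result. Once accepted, all the conclusions are either standard consequences of Littlewood-Paley theory or routine density arguments.
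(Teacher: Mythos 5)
Your overall strategy is the same as the paper's: everything is reduced to the existence of a divergence-free vector wavelet basis of $L^2_\sigma(\R^3)$ that is simultaneously orthonormal in $L^2$ and an unconditional basis of $W^{1,p}_\sigma(\R^3)$, after which \rf{PmSob} is coefficient truncation for an unconditional basis and \rf{sum_nabla} follows from the $p=2$ case by density. The paper's own proof is exactly this reduction, citing the Battle--Federbush divergence-free wavelets \cite{BF93} (see also \cite{L12}) and noting that \rf{PmSob} follows from Calder\'on--Zygmund theory via size and moment estimates for those wavelets.

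There is, however, one concrete problem: the basis you invoke does not exist. By a theorem of Lemari\'e-Rieusset, divergence-free wavelets cannot be simultaneously orthogonal and compactly supported; the compactly supported divergence-free wavelets available in the literature form only a biorthogonal (Riesz) system, and for such a system $\sum_{k\leq m}\langle v,g_k\rangle g_k$ is no longer the $L^2$-orthogonal projection, so your ``coefficient restriction'' argument for \rf{PmSob} would not apply to the operator $P_m$ as defined in the lemma. The correct off-the-shelf object is the Battle--Federbush basis, which is orthonormal and divergence free but only exponentially decaying. With that choice your dyadic square-function equivalence, written with indicators $\chi_{Q_{j,k}}$ as though each wavelet were supported in its cube, is not immediate from support considerations; it must instead be extracted from Calder\'on--Zygmund/vaguelette estimates using the exponential decay and vanishing moments --- which is precisely the step the paper's short proof alludes to. Once the basis is corrected, the rest of your argument --- unconditionality gives the uniform $W^{1,p}$ bound for coefficient restriction, the $p=2$ bound plus density of finite linear combinations gives $P_m v\to v$ in $H^1_\sigma(\R^3)$, and the first identity in \rf{sum_nabla} is the componentwise expansion of the divergence-free fields $\partial_i v$ in the orthonormal basis --- is sound and matches the paper's intent.
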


\begin{proof}
These properties are satisfied by the divergence free vector wavelet basis introduced by Battle and Federbush \cite{BF93}. See also \cite[Ch.~3]{L12} for a review of properties of such  bases. These wavelets decay exponentially. A standard application of the  Calder\'on-Zygmund theory yields  inequality \rf{PmSob} using the size and moments estimates for these wavelets.
\end{proof}

\begin{corollary}\label{C_conv_6}
Under the assumptions of Lemma \ref{est_proj}, for every $v\in H^1_\sigma (\bbfR^3) $ we have 
\begin{equation}\label{conv_6}
    \| P_m v - v \|_{H^1_\sigma } \rightarrow 0 \qquad \textit{as}\quad m \rightarrow \infty .
\end{equation}
\end{corollary}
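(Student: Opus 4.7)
The plan is to extract the $H^1$-convergence from the two structural properties already gathered in Lemma \ref{est_proj}: orthonormality of $\{g_k\}$ in $L^2_\sigma(\R^3)$ handles the $L^2$-part of the $H^1$-norm, while the second identity in \rf{sum_nabla} handles the gradient part. Since both properties are already given, the corollary should follow in a few lines with no serious technical obstacle.

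For the $L^2$-part, note that $P_m v = \sum_{k=1}^m \langle v, g_k\rangle g_k$ is precisely the $L^2$-orthogonal projection of $v$ onto the finite-dimensional subspace spanned by $g_1, \ldots, g_m$ and $\{g_k\}_{k=1}^\infty$ is a complete orthonormal system in $L^2_\sigma(\R^3)$. Parseval's identity therefore yields
\begin{equation*}
   \|P_m v - v\|_2^2 = \sum_{k=m+1}^\infty |\langle v, g_k\rangle|^2 \longrightarrow 0 \qquad \text{as}\quad m\to\infty
\end{equation*}
for every $v \in L^2_\sigma(\R^3)$, and in particular for every $v \in H^1_\sigma(\R^3)$.

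For the gradient part, differentiating termwise gives
\begin{equation*}
   \nabla P_m v = \sum_{k=1}^m \langle v, g_k\rangle \nabla g_k ,
\end{equation*}
which is exactly the $m$-th partial sum of the series appearing on the right-hand side of the second equality in \rf{sum_nabla}. Since that series converges strongly in $L^2_\sigma(\R^3)$ to $\nabla v$, we conclude that $\|\nabla P_m v - \nabla v\|_2 \to 0$ as $m\to \infty$. Combined with the previous step this yields $\|P_m v - v\|_{H^1_\sigma} \to 0$, which is \rf{conv_6}.

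An alternative route, should one wish to avoid invoking \rf{sum_nabla} directly, would use the uniform operator bound \rf{PmSob} with $p=2$ together with a $3\varepsilon$-argument: the Riesz basis property of Lemma \ref{est_proj} implies that finite linear combinations of the $g_k$'s are dense in $H^1_\sigma(\R^3)$, and for any such $v$ one has $P_m v = v$ for all sufficiently large $m$; the uniform bound then propagates convergence to arbitrary $v \in H^1_\sigma(\R^3)$. The direct argument via \rf{sum_nabla} is shorter, however. The real content is in Lemma \ref{est_proj} itself, namely the existence of a divergence-free vector wavelet basis with these properties; Corollary \ref{C_conv_6} is essentially a reformulation of that data.
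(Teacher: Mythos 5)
Your proof is correct and follows exactly the same route as the paper: Parseval for the $L^2$-part, then identifying $\nabla P_m v$ with the $m$-th partial sum of the strongly convergent series in the second equality of \rf{sum_nabla} for the gradient part. The alternative density argument you sketch is a valid fallback but is not needed.
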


\begin{proof}
Immediate properties of the $L^2$-projection $P_m$  give $\lim_{m\rightarrow \infty } \| P_m v - v\|_2 = 0$. Note now that we have $\nabla P_m v = \sum_{k=1}^{m} \langle v , g_k \rangle \nabla g_k. $
\ Thus,  by the second equality in  \rf{sum_nabla} we obtain $\lim_{m\rightarrow \infty }\| \nabla P_m v - \nabla v \|_2 = 0$.
\end{proof}

We now return to the proof of Theorem \ref{th:ex1}. Use \rf{new_eq_w_m_1}
with  $\vf (t)$ replaced by    the following test function
\begin{equation}\label{phi:def}
   E(t) \vf_m (t) = E(t) P_m \big( w_m (t) \ast \psi (t)  \ast  \psi (t)\big)  ,
\end{equation}
where  $E \in C^1\big([0, \infty )\big)$, $E(t) \geqslant 0$ and $\psi (t) \in C^1\big( [0, \infty ); \mathcal{S}(\bbfR^3)\big)$. Here, $P_m : L^2_\sigma (\bbfR^3) \rightarrow W_m$ is the usual orthogonal projection (see the proof of the first part of Theorem \ref{th:ex}), thus, $\vf_m = P_m (w_m \ast \psi \ast \psi ) \in C^1 \big( [0, \infty ) , W_m \big)$.

Using properties of the projection $P_m$ yields
\begin{equation}\label{proj}
    \langle v_m , P_m (v) \rangle = \langle v_m, v \rangle \quad \textrm{for all} \quad v_m \in W_m \quad \textrm{and} \quad  v \in L^2_\sigma (\bbfR^3 ).
\end{equation}
By the Leibniz formula, we  have 
\begin{equation}\label{ddtEP}
   \begin{split}
      \frac{\ud }{\ud t} \Big( E P_m\big(w_m \ast \psi  \ast \psi \big) \Big) = &E' P_m\big( w_m \ast \psi  \ast \psi \big)
 + 2E P_m\big( w_m  \ast \psi '  \ast \psi \big)\\
&+ E P_m\bigg( \Big(\frac{\ud }{\ud t}w_m \Big)\ast \psi  \ast \psi \bigg)\ .
   \end{split}
\end{equation}
A combination of   \rf{proj}, \rf{phi:def},  and  properties of the convolution gives 
\begin{equation}\label{wEP}
   \begin{split}
      \bigg\langle w_m ,E P_m\Big( \Big(\frac{\ud }{\ud t}w_m \Big)\ast \psi  \ast \psi  \Big) \bigg\rangle 
 &= \bigg\langle w_m ,E  \Big(\frac{\ud }{\ud t}w_m \Big)\ast \psi  \ast \psi  \bigg\rangle \\
& = \Big\langle \frac{\ud }{\ud t} w_m , E w_m \ast \psi \ast \psi \Big \rangle 
 = \Big\langle \frac{\ud }{\ud t} w_m , \vf_m \Big \rangle.
   \end{split}
\end{equation}
Equality \rf{w_m_plus_phi} can be rewriten as 
\begin{equation}\label{wmphim}
   \begin{split}
      \Big\langle \frac{\ud }{\ud t} w_m , \vf_m \Big \rangle =& - \langle \nabla w_m , \nabla \vf_m \rangle - \mathfrak{b}( w_m, w_m, \vf_m ) \\
& - \mathfrak{b}( w_m, V, \vf_m ) -  \mathfrak{b}( V, w_m, \vf_m ).  
   \end{split}
\end{equation}
After substituting 
 $\vf  = E\vf_m = EP_m (w_m \ast \psi \ast \psi )$ into \rf{new_eq_w_m_1} and using \rf{proj}--\rf{wEP}, we obtain the equality  
\begin{equation}\label{int_s_t}
  \begin{split}
    E(t) \langle w_m (t) , \vf_m (t) \rangle  = &E(s) \langle w_m (s) , \vf_m (s) \rangle 
     + \int_s^t E' (\tau ) \big\langle w_m (\tau ) , \vf_m (\tau ) \big\rangle \ud \tau \\
    & + 2 \int_s^t E( \tau ) \big\langle w_m ( \tau ), w_m (\tau ) \ast \psi ' (\tau )  \ast  \psi (\tau )  \big\rangle \ud \tau \\
    & - 2 \int_s^t E(\tau) \langle \nabla w_m (\tau ) , \nabla \vf_m (\tau ) \rangle \ud \tau \\
    & - 2 \int_s^t  E(\tau) \mathfrak{b} \big( w_m (\tau ) , w_m (\tau ) , \vf_m (\tau ) \big) \ud \tau \\
    & -2 \int_s^t  E(\tau) \mathfrak{b} \big( w_m (\tau ) , V (\tau ) , \vf_m (\tau ) \big) \ud \tau \\
    & - 2 \int_s^t  E(\tau) \mathfrak{b} \big( V(\tau ) , w_m (\tau ) , \vf_m (\tau ) \big) \ud \tau .
  \end{split}
\end{equation}

Passing to the limit  as $m \rightarrow \infty $ in \rf{int_s_t}  
will yield the generalized inequality \rf{gen_energy}. 
Here, we  adapt arguments from \cite[Proof of Prop. 2.3]{ORS} to our more general case. 
We need the following result on the convergence of the sequence $\vf_m$.

\begin{lemma}\label{lem_conv_7}
Under the above assumptions 
\begin{equation}\label{conv_7}
   \vf_m = P_m( w_m \ast \psi \ast \psi ) \rightarrow w\ast \psi \ast \psi \qquad \textit{as}\quad m \rightarrow \infty  
\end{equation}
in $L^2\big( [0, T], H^1_\sigma (\bbfR^3) \big)$ for each $T>0$. Moreover, for each $p\in [2, \infty )$ there exists a constant $C = C(p, \psi ) >0$ such that
\begin{equation}\label{est_conv_7}
    \| \nabla \vf_m (t) \|_p \leqslant C \| w_0 \|_2
\qquad \text{for all}\quad t>0.
\end{equation}
\end{lemma}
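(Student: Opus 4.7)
The plan is to establish the uniform bound \rf{est_conv_7} first (which is the easier part), then prove the convergence \rf{conv_7} by splitting the difference into a piece controlled by the strong $L^2$-convergence of $w_m$ and a piece controlled by the projection property from Corollary \ref{C_conv_6}.

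For \rf{est_conv_7}, I would write $\nabla\vf_m(t) = \nabla P_m\bigl(w_m(t)\ast\psi(t)\ast\psi(t)\bigr)$ and apply the Riesz basis bound \rf{PmSob} to reduce matters to estimating $\|w_m(t)\ast\psi(t)\ast\psi(t)\|_{W^{1,p}}$. By Young's convolution inequality with exponents satisfying $1/q = 1/2 + 1/p$, this quantity is bounded by $\|w_m(t)\|_2$ times the $L^q$-norms of $\psi(t)\ast\psi(t)$ and its gradient, both of which are finite and uniformly bounded in $t\in[0,\infty)$ thanks to $\psi\in C^1([0,\infty);\mathcal{S}(\R^3))$. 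Combining with the energy estimate $\|w_m(t)\|_2\leq\|w_0\|_2$ from \rf{w_m_w_0_est} then yields \rf{est_conv_7}.

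For the convergence \rf{conv_7}, I would decompose
\[
\vf_m - w\ast\psi\ast\psi = P_m\bigl((w_m-w)\ast\psi\ast\psi\bigr) + \bigl(P_m(w\ast\psi\ast\psi) - w\ast\psi\ast\psi\bigr).
\]
For the first term, applying \rf{PmSob} with $p=2$ together with Young's inequality in the form $\|f\ast g\|_{H^1}\leq \|f\|_2\bigl(\|g\|_1+\|\nabla g\|_1\bigr)$ bounds its $H^1_\sigma$-norm at time $t$ by $C\|(w_m-w)(t)\|_2$ times a factor uniform in $t\in[0,T]$; the strong $L^2([0,T],L^2(\R^3))$-convergence from Lemma \ref{conv_4} then forces this piece to vanish in $L^2([0,T],H^1_\sigma)$. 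For the second term, Corollary \ref{C_conv_6} provides pointwise-in-$t$ convergence $P_m v(t)\to v(t)$ in $H^1_\sigma$ for the fixed function $v(t)=w(t)\ast\psi(t)\ast\psi(t)$, while the uniform domination $\|P_m v(t)-v(t)\|_{H^1}\leq (1+C)\|v(t)\|_{H^1}$, which is bounded on $[0,T]$ thanks to $w\in L^\infty([0,T],L^2_\sigma)$ and $\psi\in C^1([0,T];\mathcal{S}(\R^3))$, allows Lebesgue's dominated convergence theorem to upgrade this to $L^2([0,T],H^1_\sigma)$-convergence.

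The principal obstacle is the noncommutativity of $P_m$ with $\nabla$: one cannot simply transfer derivatives onto $w_m$ and use the standard $L^2$-orthogonal projection identities. This is precisely the difficulty for which the Battle--Federbush type divergence-free wavelet basis of Lemma \ref{est_proj} is designed, since its operator-norm bound \rf{PmSob} on $W^{1,p}_\sigma$ is what permits both the uniform $L^p$-gradient estimate and the passage to $H^1$-convergence. Once \rf{PmSob} is available, the remainder of the argument is a routine combination of Young's convolution inequalities with the strong $L^2([0,T],L^2(\R^3))$-compactness of the Galerkin sequence furnished by Lemma \ref{conv_4}.
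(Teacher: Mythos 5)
Your proposal is correct and follows essentially the same route as the paper: the uniform bound comes from \rf{PmSob} combined with Young's inequality (with the same exponent relation $1+1/p=1/2+1/q$) and the energy estimate \rf{w_m_w_0_est}, while the convergence \rf{conv_7} rests on exactly the decomposition the paper uses implicitly when it invokes Corollary \ref{C_conv_6} together with Lemma \ref{conv_4}, the uniform $H^1$-bound, and dominated convergence. Your version merely makes the splitting into $P_m\bigl((w_m-w)\ast\psi\ast\psi\bigr)$ and $P_m(w\ast\psi\ast\psi)-w\ast\psi\ast\psi$ explicit, which is a presentational rather than a mathematical difference.
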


\begin{proof}
By Lemma \ref{conv_4}, 
 $w_m (t) \rightarrow w(t)$ strongly in $L^2_\sigma (\bbfR^3)$ for almost all $t > 0$, hence, by properties of the convolution, we have
\begin{equation*}
   \nabla \big( w_m \ast \psi \ast \psi \big)(t) =
\big( w_m \ast \nabla \psi \ast \psi \big)(t)
\rightarrow \big(w\ast \nabla \psi \ast \psi \big)(t)
=\nabla \big( w \ast \psi \ast \psi \big)(t) 
\end{equation*}
strongly in $L^2_\sigma (\bbfR^3)$ for almost all $t > 0$, as well. Thus, by Corollary \ref{C_conv_6}, we obtain 
\begin{equation}\label{Pm_conv}
    P_m \big( w_m \ast \psi \ast \psi \big) (t) \rightarrow \big( w\ast \psi \ast \psi \big) (t) 
\end{equation}
strongly in $H^1_\sigma (\bbfR^3)$ for almost all $t > 0$. Finally, by \rf{PmSob} and by \rf{w_m_w_0_est}, we obtain 
\begin{align*}
    \| P_m\big( w_m \ast \psi \ast \psi \big) (t) \|_{H^1_\sigma } &\leqslant C\| w_m \ast \psi \ast \psi (t)\|_{H^1_\sigma } \\
    & \leqslant C\| w_m (t) \|_2 \Big( \| \psi \ast \psi (t) \|_1 + \| \big( \nabla \psi \ast \psi \big) (t)\|_1 \Big) \\
    & \leqslant C(\psi ) \| w_0\|_2 .
\end{align*}
Hence, we may apply the Lebesgue dominated convergence theorem to complete the proof of the first part of Lemma \ref{lem_conv_7}.

To prove the second part, we use estimate \rf{PmSob} and \rf{w_m_w_0_est} as well as the Young inequality for  convolutions with exponents $p$ and $q$ satisfying $1/p + 1 = 1/2 + 1/q$ to obtain
\begin{align*}
   \| \nabla \vf_m (t) \|_p &\leqslant C \| \big( w_m \ast \psi \ast \psi \big) (t)\|_{W^{1,p}} 
\leqslant C \| w_m(t) \|_2 \Big( \| \psi \ast \psi \|_q + \| \nabla \psi \ast \psi \|_q \Big) \leqslant C \| w_0\|_2. 
\end{align*}
\end{proof}

Now, we are in a position to pass to the limit in each term of \rf{int_s_t}.

Since $w_m (t) \rightarrow w(t) $ strongly in $L^2_\sigma (\bbfR^3)$ for almost all $t > 0$, we also have
\begin{equation*}
    \vf_m (t) = P_m \big( w_m \ast \psi \ast \psi \big) (t) \rightarrow \big( w \ast \psi \ast \psi \big) (t) 
\end{equation*}
strongly in $L^2_\sigma (\bbfR^3)$.  Consequently,
\begin{align*}
    E(t) \langle w_m(t), \vf_m (t) \rangle \rightarrow E(t) \langle w(t), w(t) \ast \psi (t) \ast \psi (t) \rangle  = E(t) \| w (t) \ast \psi (t) \|_2^2  
\end{align*}
almost everywhere in $t$.

The convergence of the sequence $\{w_m\}$ in $L^2\big( [0, T], L^2_\sigma (\bbfR^3 ) \big)$ and Lemma \ref{lem_conv_7} allows us to pass to the limit in the second and third term on the right-hand side of \rf{int_s_t} and to obtain, as $m\rightarrow \infty $, 
\begin{align*}
    \int_s^t E' (\tau ) \langle w_m (\tau ) , \vf_m (\tau ) \rangle \ud \tau \rightarrow & \int_s^t E' (\tau ) \langle w (\tau ) , w (\tau ) \ast \psi (\tau ) \ast \psi (\tau ) \rangle \ud \tau \\
    &= \int_s^t E' (\tau ) \| w(\tau ) \ast \psi (\tau )\|^2_2 \ud \tau
\end{align*}
and 
\begin{equation*}
    \int_s^t E (\tau ) \langle w_m (\tau ) , w_m (\tau ) \ast \psi' (\tau ) \ast \psi (\tau ) \rangle \ud \tau \rightarrow \int_s^t E (\tau ) \langle w (\tau ) , w (\tau ) \ast \psi '(\tau ) \ast \psi (\tau ) \rangle \ud \tau .
\end{equation*}
It follows from the weak convergence \rf{conv_1} that 
\begin{align*}
   \liminf_{m\rightarrow \infty } \int_s^t E(\tau ) \langle \nabla w_m (\tau ), \nabla \vf_m (\tau ) \rangle \ud \tau &\geqslant  \int_s^t E(\tau ) \langle \nabla w (\tau ), \nabla \big( w(\tau ) \ast \psi (\tau ) \ast \psi (\tau )\big) \rangle \ud \tau \\
   &= \int_s^t E(\tau ) \| \nabla w( \tau ) \ast \psi (\tau ) \|_2^2 \ud \tau .
\end{align*}

Using properties of the trilinear form $\mathfrak{b}(\cdot , \cdot , \cdot )$ recalled in Remark \ref{form_b} the nonlinear term (the fifth one on the right-hand side of \rf{int_s_t} ) is estimated  as follows
\begin{align*}
   \Big| \int_s^t E(\tau ) \mathfrak{b}(w_m , w_m  ,  \vf_m ) (\tau )& \ud \tau - \int_s^t E(t)\mathfrak{b}( w, w ,  w \ast \psi   \ast  \psi )(\tau ) \ud \tau \Big| \\
    \leqslant &\Big| \int_s^t E(\tau ) \Big\langle \big( w_m  -w  \big) \cdot \nabla \vf_m   , w_m \Big\rangle (\tau ) \Big| \ud \tau\\
   & + \Big|  \int_s^t E(\tau )\Big\langle w  \cdot \nabla \vf_m  , w_m - w   \Big\rangle (\tau ) \Big| \ud \tau\\
   & + \Big|  \int_s^t E(\tau )\Big\langle w  \cdot \nabla \big( \vf_m - w \ast \psi  \ast \psi  \big) , w \Big\rangle (\tau ) \Big| \ud \tau\\
   \equiv& I_1 + I_2 + I_3.
\end{align*}
To estimate the term $I_1$,  we use the H\"older inequality, estimate \rf{est_conv_7} with $p = 4$, inequality \rf{w_m_w_0_est}, 
and Corollary \ref{L_p_conv}
to obtain
\begin{align*}
    I_1 &\leqslant \int_s^t E(\tau )\| w_m (\tau ) - w(\tau )\|_4 \| \nabla \vf_m (\tau ) \|_4 \| w_m (\tau )\|_2 \ud \tau \\
    & \leqslant C \| E \|_{L^\infty (0, \infty )} \| w_0 \|_2^2 \int_s^t \| w_m (\tau ) - w(\tau )\|_4  \ud \tau \\
    & \leqslant C(E, w_0) (t-s)^{1/2} \int_s^t  \| w_m (\tau ) - w(\tau )\|_4^2   \ud \tau \rightarrow 0 \quad \textrm{as} \quad m\rightarrow \infty. 
\end{align*}
 An analogous reasoning applies  to the term $I_2$.

Estimates for $I_3$ are similar. By the H\"older inequality and  the well-known inequality
$\| w\|_4 \leqslant C \| w\|_2^{1/4} \| \nabla w\|_2^{3/4}$ we have
\begin{align*}
    I_3 \leqslant &\| E \|_{L^\infty (0, \infty )} \int_s^t | \| w(\tau )\|_4^2  \| \nabla \big( \vf_m - w\ast \psi \ast \psi \big) (\tau )\|_2 \ud \tau \\
     \leqslant &C(E) \int_s^t \| w(\tau )\|_2^{1/2}  \| \nabla w(\tau )\|_2^{3/2} \| \nabla \big( \vf_m  - w \ast \psi \ast \psi \big) (\tau )\|_2\ud \tau \\
     \leqslant &C(E ) \ve \int_s^t \| w (\tau )\|_2^{2/3} \| \nabla w(\tau ) \|_2^2 \ud \tau \\
    & + C(E, \ve ) \int_s^t \| \nabla \big( \vf_m - w\ast \psi \ast \psi \big) (t) \|_2^4 \ud \tau .
\end{align*}
The first term is arbitrarily small with $\ve > 0$ because by the energy inequality \rf{en:inq}, the quantity $\int_s^t \| w (\tau )\|_2^{2/3} \| \nabla w(\tau ) \|_2^2 \ud \tau $ may be bounded by a multiple of $\|w_0\|_2^{2+2/3}$. The second term converges to zero as $m \rightarrow \infty $ which results from Lemma \ref{lem_conv_7} and from the estimate
\begin{equation*}
   \| \nabla \big( \vf_m - w\ast \psi \ast \psi \big) (t) \|_2 \leqslant \| \nabla \vf_m \|_2 + \| w \ast \nabla \psi \ast \psi \|_2 \leqslant C\| w_0\|_2
\end{equation*}
being a direct consequence of estimate \rf{est_conv_7}.

The final step is to deal with the last two terms on the right-hand-side of \rf{int_s_t} which contain the solution $V=V(x,t)$. Using  the standing assumptions \rf{est_b_1} and \rf{est_b_2} yields 
\begin{align*}
    \int_s^t E(\tau )\Big| \bfo \big( w_m  , &\vf_m , V \big) (\tau ) - \bfo \big( w , w  \ast \psi \ast \psi , V \big) (\tau )\Big| \ud \tau \\
   \leqslant & \| E \|_{L^\infty (0, \infty )} K\sup_{t>0} \| V(t) ) \|_{\X} \int_s^t \| \nabla w_m (\tau ) \|_2 \| \nabla \big( \vf_m - w \ast \psi  \ast \psi \big) (\tau )\|_2 \\
   &+  \int_s^t E(\tau )\Big| \bfo \big( ( w_m - w  , w  \ast \psi  \ast \psi , V \big) (\tau )\Big| \ud \tau \\
   \leqslant& C \ve \int_s^t \| \nabla w_m (\tau )\|_2^2 \ud \tau + C(\ve ) \int_s^t \| w_m(\tau ) - w(\tau )\|_2^2 \ud \tau \\
   &+  \| E \|_{L^\infty (0, \infty )} \int_s^t \Big| \bfo \big( ( w_m  - w , w  \ast \psi  \ast \psi , V \big) (\tau )\Big| \ud \tau .
\end{align*}
The first term on the right-hand-side can be made arbitrarily small since $\ve >0$ is arbitrary and  since $\int_s^t \| \nabla w_m(s)\|_2^2 \ud s \leqslant \|w_0\|_2^2 $, (see \rf{w_m_w_0_est}). The second term converges to zero by Lemma ~\ref{lem_conv_7} and the third one converges to zero by Remark \ref{dual_space} combined with the Lebesgue dominated convergence theorem.

Analogously, we show that 
\begin{equation*}
  \int_s^t E(\tau ) \mathfrak{b} ( V, w_m, \vf ) (\tau ) \ud \tau \rightarrow \int_s^t E(\tau ) \mathfrak{b} ( V, w, w \ast \psi \ast \psi ) (\tau ) \ud \tau \qquad \textrm{as} \quad m\rightarrow \infty .
\end{equation*}
This completes the proof of the generalized energy inequality \rf{gen_energy}.
\end{proof}
\section{Asymptotic stability of weak solutions}\label{splitting}
\setcounter{equation}{0}

The proof of the $L^2$-decay of a solution to the perturbed problem \rf{w:eq}--\rf{w:ini} is somewhat challenging and we use {\it the  Fourier splitting}, a technique that  was  introduced  in \cite{S1,S} and generalized in \cite{ORS}. 

\begin{proof}[Proof of Theorem \ref{th:ex}. Second part -- decay of  solutions.]
The $L^2$-norm of the solution $w = w(x,t)$ from Theorem \ref{th:ex1} is decomposed into high and low frequency terms. To deal with these two  terms,  the generalized energy inequality \rf{gen_energy}  is used with suitable functions $E(t)$ and $\psi $, as were used in Corollaries \ref{c.low} and \ref{c.high}. A modification of the Fourier splitting argument will be applied to estimate the term corresponding to high frequencies. 

We begin by decomposing the $L^2$-norm of the Fourier transform of $w$ as follows
\begin{equation} \label{low-high} 
        \| w(t) \|_2 = \| \widehat{w}(t)\|_2  \leqslant  \| \check{\vf } \widehat{w}(t)\|_2 +  \| (1-\check{\vf} ) \widehat{w}(t)\|_2, \qquad \textrm{where}\quad  \check{\vf }(\xi) = e^{-|\xi|^2}. 
\end{equation}
Each term on the right hand side of the last inequality will be estimated separately. Notice that $\check{\vf }$ is the inverse Fourier transform of the function $\vf (x) = (4\pi )^{-3/2} \se^{-|x|^2/4}$, which  is the fundamental solution of the heat equation at $t = 1$.

{\it Estimates of  the low frequencies.}
Using the Plancherel identity and Corollary \ref{c.low}, we have 
\begin{equation}
    \begin{split}
    (2\pi )^{\frac{3}{2}} \| \check{\vf }\widehat{w}(t)\|_2^2 &=  \| \vf  \ast w(t) \|_2 \\
&\leqslant \| \se^{(t-s)\Delta }\vf \ast w(s)\|_2^2 \\
&+  2(2 \pi )^{-\frac{3}{2}} \int_s^t \big| \mathfrak{b} (w, w, \se^{2(t-\tau )\Delta}\vf \ast \vf \ast w)  (\tau ) \big| \ud \tau \\
&+  2(2 \pi )^{-\frac{3}{2}} \int_s^t \big| \mathfrak{b} (V, w,\se^{2(t-\tau )\Delta} \vf  \ast \vf\ast w ) (\tau ) \big| \ud \tau \\
&+2(2 \pi )^{-\frac{3}{2}} \int_s^t \big| \mathfrak{b}( w , V, \se^{2(t-\tau )\Delta} \vf  \ast  \vf  \ast  w ) (\tau )\big| \ud \tau  \\ 
&=I_1(t,s) + 2(2 \pi )^{-\frac{3}{2}}\int_s^t \big( I_2 (\tau ) + I_3 (\tau ) + I_4(\tau )\big) \ud \tau .
    \end{split}
\end{equation}
Quantities $I_i$ are going to be estimated  separately for each $i=1, 2, 3, 4$.

Note that $ \check{\vf  }\widehat{w}(s) \in L^2( \bbfR^3)$. Hence, for each fixed $ s\geqslant 0$,  by the Plancherel identity  and  the Lebesgue dominated convergence theorem it follows that
\begin{equation}\label{I}
      I_1(t,s) =  \| \se^{(t-s)\Delta }\vf \ast w(s)\|_2^2 = (2\pi )^3 \| \se ^{-(t-s) |\xi |^2} \check{\vf }\widehat{w}(s)\|_2^2 \rightarrow 0 \qquad \textrm{as }\ t\rightarrow \infty .
\end{equation}

Applying the Schwarz inequality, the well-known $L^2$-estimate for the heat semigroup, the H\"older inequality, the Sobolev inequality and, finally, the energy inequality \rf{en:inq}, we obtain 
\begin{equation} \label{I_2}
   \begin{split}
     I_2 &=  \big| \mathfrak{b}( \vf \ast \vf \ast w, w,\se^{2(t-\tau )\Delta} w ) \big|\\
&\leqslant \| \vf \ast \vf \ast (w\cdot \nabla w)\|_2 \| \se^{2(t-\tau )\Delta }w\|_2 \leqslant C \| \vf \ast \vf \|_{\frac{6}{5}}\|w\cdot \nabla w\|_{\frac{3}{2}} \|w\|_2 \\
&\leqslant  C \| \vf \ast \vf \|_{\frac{6}{5}} \|w\|_6 \|\nabla w\|_2 \|w\|_2 \leqslant  C \| \vf \ast \vf \|_{\frac{6}{5}} \| w_0 \|_2 \|\nabla w\|_2^2 .
   \end{split}
\end{equation}

Using the standing assumption \rf{hardy} (reformulated in Remark \ref{form_b}), properties of the heat semigroup and of the convolution, we get
\begin{equation} \label{I_3}
    \begin{split}
   I_3 &=   \big|\mathfrak{b}(V, w, \se^{2(t-\tau )\Delta} \vf  \ast \vf \ast w) \big| \\
&\leqslant K\|V \|_{\X} \| \nabla w\|_2 \| \nabla \se^{2(t-\tau )\Delta  }\vf \ast \vf \ast w\|_2 \\
&\leqslant  K \| V \|_{\X}\| \nabla w\|_2 \| \vf \ast \vf \ast \nabla w\|_2 \leqslant  K \| \vf \ast \vf \|_1  \|\nabla w\|_2^2 \sup_{t>0} \| V(t)\|_{\X},
    \end{split}
\end{equation}
where $K>0$ is the constant from inequality \rf{hardy}. By an analogous argument involving assumption \rf{hardy}, we have
\begin{equation} \label{I_4}
    \begin{split}
   I_4 &=  \big| \mathfrak{b}( w, V,\se^{2(t-\tau )\Delta} w \ast \vf \ast \vf  )\big| \\ 
&= \Big| \mathfrak{b}\big( w, \se^{2(t-\tau )\Delta} w \ast \vf \ast \vf , V\big) \Big| \leqslant K \| \vf \ast \vf \|_1 \|\nabla w\|_2^2  \sup_{t>0} \| V(t)\|_{\X}.
    \end{split}
\end{equation}

Combining estimates \rf{I_2}--\rf{I_4}, we obtain the following  $L^2$-bound of  the low frequencies
\begin{equation} \label{low}
     \|\check{\vf }\widehat{w}(t)\|_2^2 \leqslant  I_1(t,s) + C \Big( \| w_0 \|_2 + \sup_{t >0} \| V(t) \|_{\X}\Big)\int_s^t  \|\nabla w(\tau )\|_2^2 \ud \tau ,
\end{equation}
where the constant $C=C(K, \check{\vf} ) >0$ is independent of $w$.

Let $s>0$ be fixed and large on the right-hand side of inequality \rf{low}. The term $I_1(t,s)$ tends to zero as $t\rightarrow \infty $ by \rf{I}. Since $\int_0^\infty \| \nabla w( \tau )\|_2^2 \ud \tau <\infty $, the quantity $\int_s^\infty  \|\nabla w(\tau )\|_2^2 \ud \tau $ can be made arbitrary small choosing $s$ large enough. Hence, $\|\check{\vf }\widehat{w}(t)\|_2^2 \rightarrow 0$ as $t\rightarrow \infty $.

{\it Estimates of the high frequencies.}
To deal with the term $\| (1-\check{\vf} ) \widehat{w}(t)\|_2$ in  the decomposition \rf{low-high}, we use Corollary \ref{c.high} with the test function $\vf $ satisfying $\check{\vf }(\xi ) =\exp{(-|\xi|^2)}$ and  a function $E(t) >0$ to be determined below.  We  then apply the Fourier-splitting method to estimate each term on the right-hand side of \rf{ineq_high}.

For every $G (t) \geqslant 0$,  the Plancherel formula 
applied to the second and third term 
on the right hand side of  \rf{ineq_high} yields 
\begin{align*}
         J_1& = \int_s^t E'(\tau ) \| w(\tau ) - \vf \ast w(\tau )\|^2_2 \ud \tau - 2\int_s^t E(\tau ) \|\nabla w(\tau ) - \vf \ast \nabla w(\tau )\|_2^2 \ud \tau \\
& = \int_s^t  E'(\tau)  \int_{|\xi | >G(t)}\Big| \big( 1-\check{\vf }(\xi)\big) \widehat{w}(\xi , \tau) \Big|^2 \ud \xi  \ud \tau  \\
&-2\int_s^t E(\tau)  \int_{|\xi | >G(t)} \Big| |\xi| \big( 1-\check{\vf }(\xi) \big) \widehat{w}(\xi , \tau) \Big|^2 \ud \xi  \ud \tau\\
&+\int_s^t  E'(\tau)  \int_{|\xi | \leqslant G(t)}\Big| \big( 1-\check{\vf }(\xi)\big) \widehat{w}(\xi , \tau) \Big|^2 \ud \xi  \ud \tau  \\
&-2\int_s^t E(\tau)  \int_{|\xi | \leqslant G(t)} \Big| |\xi| \big( 1-\check{\vf }(\xi) \big) \widehat{w}(\xi , \tau) \Big|^2 \ud \xi  \ud \tau .
\end{align*}
Now,  choose
\begin{equation}\label{fun_G_E}
   E(t) = (1+t)^{\alpha} \qquad\textrm{and } \qquad G^2 (t) = \frac{\alpha}{2(t+1)} \qquad \textrm{with fixed}\quad \alpha >0,
\end{equation}
then  $E'(\tau )-2E(\tau ) G^2(\tau )=0$. Thus,
\begin{align*}
       \int_s^t  E'(\tau)  &\int_{|\xi | >G(t)}\Big| \big( 1-\check{\vf }(\xi)\big) \widehat{w}(\xi , \tau) \Big|^2 \ud \xi  \ud \tau  -2\int_s^t E(\tau)  \int_{|\xi | >G(t)} \Big| |\xi| \big( 1-\check{\vf }(\xi) \big) \widehat{w}(\xi , \tau) \Big|^2 \ud \xi  \ud \tau\\
&\leqslant \int_s^t \Big[ E'(\tau )-2 E(\tau) G^2(\tau) \Big]  \int_{|\xi | >G(t)}\big| (1-\check{\vf }(\xi)) \widehat{w}(\tau) \big|^2 \ud \xi  \ud \tau =0.
\end{align*}

Moreover, for small $|\xi|$, it follows that $\big|1-\check{\vf }(\xi) \big| = 1-\se^{-|\xi|^2} \leqslant |\xi|^2$. Hence, using \rf{fun_G_E}, we obtain
\begin{align*}
    \int_s^t  E'(\tau) & \int_{|\xi | \leqslant G(t)}\Big| \big( 1- \check{\vf }(\xi) \big) \widehat{w}(\xi , \tau )\Big|^2 \ud \xi  \ud \tau \\
&= \int_s^t E'(\tau ) \int_{|\xi | \leqslant G(t)}|(1-\check{\vf }(\xi)) \widehat{w}(\xi , \tau )|^2 \ud \xi  \ud \tau \\
&\leqslant  C \|w_0\|_2^2 \int_s^t E'(\tau )G^4(\tau ) \ud \tau \leqslant C \int_s^t (1+\tau )^{\alpha -3} \ud \tau .
\end{align*}
Hence, since $E(t) \geqslant 0$, we conclude that  
\begin{equation}\label{cota} 
   J_1 \leqslant C \int_s^t (1+\tau )^{\alpha -3} \ud \tau .
\end{equation}

To deal with the other terms on the right-hand side of the inequality \rf{ineq_high},  set $\eta = \vf \ast \vf - 2 \vf$ to simplify the notation. Combining  the H\"older and the Young inequalities with the Sobolev inequality $\|w \|_6 \leqslant \| \nabla w \|_2$ (as in the estimates of low frequencies in \rf {I_3}) yields
\begin{equation}\label{J2}
   \begin{split}
&  \int_s^t E(\tau) \big| \mathfrak{b}( w, w, \vf  \ast \vf  \ast w - 2 \vf \ast w )(\tau )  \big| \ud \tau \\
&  = \int_s^t E(\tau ) \big| \mathfrak{b}( w, w,\eta \ast w ) (\tau ) \big| \ud \tau \leqslant \int_s^t E(\tau ) \| w(\tau ) \cdot \nabla w(\tau ) \|_{\frac{3}{2}} \| \eta \ast w(\tau )\|_3 \ud \tau \\ 
&\leqslant \|\eta\|_{\frac{6}{5}}\int_s^t E(\tau )  \| w(\tau )\|_6 \|\nabla w(\tau )\|_2 \| w(\tau )\|_2 \ud \tau \leqslant    C  \|\eta\|_{\frac{6}{5}}\| w_0\|_2  \int_s^t E(\tau )  \| \nabla w (\tau )\|_2^2 \ud \tau .
    \end{split}
\end{equation}

The two terms on the right-hand side of \rf{ineq_high} containing the solution $V$ are estimated by the standing assumption \rf{hardy}, see Remark \ref{form_b}. Indeed, we have
\begin{equation}\label{J3}
    \begin{split}
&  \int_s^t E(\tau )\big| \mathfrak{b}(V, w,  \vf  \ast \vf  \ast w - 2 \vf \ast w )(\tau ) \big| \ud \tau \\
&= \int_s^t E(\tau ) \big| \mathfrak{b} (V, w , \eta \ast w)(\tau )  \big| \ud \tau \leqslant C \| \eta \|_1\Big( \sup_{t>0} \| V(t)\|_{\X} \Big) \int_s^t E(\tau ) \| \nabla w (\tau )\|_2^2 \ud \tau  .
     \end{split}
\end{equation}

Proceeding in an analogous way, it follows that 
\begin{equation}\label{J4}   
    \begin{split}  
   \int_s^t E(\tau)\big| \mathfrak{b}(w, V, w &- 2 \vf \ast w  + \vf  \ast \vf  \ast w)(\tau)  \big| \ud \tau  \\
&\leqslant C (1+\| \eta \|_1 )\Big( \sup_{t>0}\|V\|_{\X} \Big)  \int_s^t E(\tau )  \|\nabla w(\tau )\|_2^2  \ud \tau .
    \end{split}
\end{equation}

Thus, dividing inequality \rf{ineq_high} by $E(t)$ and using estimates \rf{cota}, \rf{J2}, \rf{J3} and \rf{J4}, we get 
\begin{equation}\label{limsup}
     \begin{split}
  \|( 1- \check{\vf }) \widehat{w}(t)\|_2^2 &\leqslant  \frac{E(s)}{E(t)} \| (1-\check{\vf }) \widehat{w}(s)\|_2^2\\ 
&+  C \frac{1}{E(t)}\int_s^t (1+\tau )^{\alpha -3} \ud \tau + C  \frac{1}{E(t)} \int_s^t E(\tau)  \|\nabla w (\tau )\|_2^2  \ud \tau .
     \end{split}
\end{equation}
For fixed $s>0$, we compute the $\limsup$  as $t\rightarrow \infty $ of both sides of \rf{limsup}. Since, $E(t) = (1 + t )^\alpha $ with some $\alpha >0$, the first term on the right-hand side of \rf{limsup} tends to zero. By a direct calculation based on the de l'H$\hat{\mbox{o}}$pital rule, we obtain 
\begin{align*}
      \limsup_{t \rightarrow \infty} \frac{1}{(t+1)^\alpha } \int_s^t (1+\tau )^{\alpha -3} \ud \tau = 0 .
\end{align*}
Thus, using the inequality $\frac{E(\tau )}{E(t)}= \big( \frac{1+\tau }{1 + t} \big)^\alpha \leqslant 1$ for $\tau \in [0,t]$, it follows from estimate \rf{limsup}
\begin{equation}\label{cosa4}
   \limsup_{t \rightarrow \infty} \| (1- \vf ) \widehat{w}(t) \|_2^2 \leqslant C \Big(\sup_{t>0} \| V(t) \|_{\X} \Big) \int_s^\infty  \| \nabla w (\tau ) \|_2^2 \ud \tau .
\end{equation}
We conclude that ${\limsup}_{t \to \infty}\|(1-\vf (\xi )) \widehat{w}(t)\|_2^2 =0$, since the right-hand side of inequality \rf{cosa4} can be made arbitrarily small for sufficiently large $s>0$. This completes the proof of Theorem \ref{th:ex}.
\end{proof}

\section*{Acknowledgments}
The authors are greatly indebted to Victor Galaktionov for drawing their attention to the Slezkin paper \cite{Slezkin} and to its translation in \cite{Gal}, to Pierre-Gilles Lemari\'e-Rieusset for helpful comments on divergence free vector wavelet bases discussed in Lemma \ref{est_proj}, and to Changxing Miao for important remarks on Theorem \ref{thm:hardy}.

This work was partially supported by  the Foundation for Polish Science operated within the Innovative Economy Operational Programme 2007-2013 funded by European Regional Development Fund (Ph.D. Programme: Mathematical
Methods in Natural Sciences).  D.~Pilarczyk was supported the MNiSzW grant No.~N~N201 418839
and G. Karch by grant  IDEAS PLUS No. IdP2011/000661.  M.  Schonbek was partially supported by the NSF Grant DMS-0900909.

\end{document}